\numberwithin{equation}{section}
\newtheorem{theorem}{Theorem}[section]
\newtheorem{lemma}[theorem]{Lemma}
\newtheorem{corollary}[theorem]{Corollary}
\theoremstyle{definition}
\newtheorem{remark}[theorem]{Remark}
\numberwithin{equation}{section}
 \newcounter{alphabet}
 \newcounter{tmp}
 \newenvironment{Thm}[1][]{\refstepcounter{alphabet}%
 	\bigskip%
 	\noindent%
 	{\bf Theorem \Alph{alphabet}}%
 	\ifthenelse{\equal{#1}{}}{}{ (#1)}%
 	{\bf .} \itshape}{\vskip 8pt}
\begin{document}


\title[Bohr-Rogosinski type inequalities for concave univalent functions]
{Bohr-Rogosinski type inequalities for concave univalent functions}

\author[Vasudeavarao Allu]{Vasudeavarao Allu}
\address{Vasudevarao Allu, School of Basic Sciences, Indian Institute of Technology Bhubaneswar,
	Bhubaneswar-752050, Odisha, India.}
\email{avrao@iitbbs.ac.in}
\author[Vibhuti Arora]{Vibhuti Arora}
\address{Vibhuti Arora, School of Basic Sciences, Indian Institute of Technology Bhubaneswar,
	Bhubaneswar-752050, Odisha, India.}
\email{vibhutiarora1991@gmail.com}

\newcommand{\D}{{\mathbb D}}
\newcommand{\C}{{\mathbb C}}
\newcommand{\real}{{\operatorname{Re}\,}}
\newcommand{\Log}{{\operatorname{Log}\,}}
\newcommand{\Arg}{{\operatorname{Arg}\,}}
\newcommand{\ds}{\displaystyle}



\keywords{Bohr radius, Bohr-Rogosinski inequality, Concave Univalent functions, Subordination.}  
\subjclass[2010]{30A10, 30C45, 30C80} 

\begin{abstract}
In this paper, we generalize and investigate the Bohr-Rogosinski's inequalities and Bohr-Rogosinski phenomenon for the subfamilies of univalent (i.e., one-to-one) functions defined on unit disk $\mathbb{D}:=\{z\in \mathbb{C}:|z|<1 \}$ which maps to the concave domain, i.e., the domain whose complement is a convex set. All the results are proved to be sharp. 

\end{abstract}

\maketitle
\section{\bf Introduction}
Let $\mathbb{D}:=\{z\in \mathbb{C}:|z|<1 \}$ be the unit disk in $\mathbb{C}$ and $H(\mathbb{D})$ denote the family of analytic functions on $\mathbb{D}$. Let $\mathcal{S}$ be the class of all univalent functions $f\in H(\mathbb{D})$ satisfying the normalization $f(0)=0=f'(0)-1$. Let $\mathcal{B}$ be the subclass of $H(\mathbb{D})$ consisting of functions that are bounded by 1. In $1914$, Harald Bohr \cite{Bohr14} proved the following remarkable result.
 \medskip
 
 \begin{Thm}\label{ThmA}
	Let $f\in \mathcal{B}$ and $f(z)=\sum_{n=0}^{\infty}a_nz^n$
then 
\begin{equation}\label{CBI}
\sum_{n=1}^{\infty} |a_n|r^n\le 1-|f(0)|
\end{equation}
for $|z|=r\le 1/3$. The number $1/3$ cannot be improved. 
\end{Thm}
\medskip

 This constant $1/3$ and the inequality \eqref{CBI} is known as {\em Bohr radius} and {\em classical Bohr inequality}, respectively for the family $\mathcal{B}$.  Bohr originally obtained the inequality \eqref{CBI} for $r\le 1/6$, which was improved to $r\le 1/3$ by Wiener, Riesz, and Schur, independently. It is worth pointing out that if $|f(0)|$ in the classical Bohr inequality is replaced by $|f(0)|^2$, then the constant $1/3$ could be replaced by $1/2$ proved by Paulsen et al. \cite{PPS02}. 
 
 \medskip
 
 There are lots of works about the classical Bohr inequality and its generalized forms in the recent years. For example, the notion of the Bohr radius was generalized by Abu-Muhanna and Ali \cite{Abu,Abu2} to include mappings from $\mathbb{D}$ to simply connected domain and to exterior of a unit disk in $\mathbb{C}$.
 Moreover, the Bohr phenomenon for shifted disks and simply connected domains are discussed in \cite{AAH22,EPR21,FR10,KS22}.
 Allu and Halder \cite{AH21}, and Bhowmik and Das \cite{BD18} have considered the Bohr phenomenon for the class of subordinations.
 In \cite{KSS2017}, Kayumov et al. have studied the Bohr radius for locally univalent planar harmonic mappings. Kayumov and Ponnusamy have \cite{Kayumov18,Kayponn18} obtained several different improved version of the classical Bohr inequality which are sharp. Bohr-type inequalities for certain integral operators have been obtained by Kayumov et al. \cite{Kayponn19}, and Kumar and Sahoo \cite{KS2021}. 
 For the recent study on the Bohr radius, we refer to \cite{Abu4,AAL20,BD19,Kaypon18,LLP2020,LPW2020,LP2019,AKP19}
 and the references therein. The recent survey article \cite{Abu-M16} and references therein may be good sources for this topic. 
 
  \medskip

Similar to the Bohr radius, there is a concept of the Rogosinski radius \cite{R23} which is defined as follows: if $f(z)=\sum_{n=0}^{\infty} a_n z^n \in \mathcal{B}$ then $|S_M(z)|=|\sum_{n=0}^{M-1} a_n z^n|<1$ for $|z|<1/2$, where $1/2$ is the best possible quantity (see also \cite{LG86,S25}). The number $r=1/2$ is called the {\em Rogosinki Radius} for the family $\mathcal{B}$.
The {\em Bohr-Rogosinki inequality}, which is considered by Kayumov {\em et al.} in \cite{Kayponn21}, is given by
\begin{equation}\label{BRogo}
	|f(z)|+\sum_{n=N}^{\infty}|a_n|r^n\le 1=|f(0)|+d(f(0),\partial f(\mathbb{D})), 
\end{equation}
for $|z|=r\le R_N$ where $R_N$ is the positive root of the equation $2(1+r)r^N-(1-r)^2=0$. Here $d(f(0),\partial f(\mathbb{D}))$ denotes the Euclidean distance from $f(0)$ to the boundary of domain $f(\mathbb{D})$. Note that the right hand side of the inequality \eqref{BRogo} can be rewritten as
\begin{equation*}
1=1-|f(0)|+|f(0)|=|f(0)|+d(f(0),\partial f(\mathbb{D})).
\end{equation*}
To generalize the classical Bohr inequality \eqref{CBI} and Bohr-Rogosinki inequality \eqref{BRogo} to the family of function $f$ defined in $\mathbb{D}$ and takes values in any given domain $f(\mathbb{D})$, the above observation is useful.
If we replace $|f(z)|$ by $|f(0)|$ and $N=1$ in \eqref{BRogo}, then we see the connection between the Bohr-Rogosinki inequality and classical Bohr inequality. Recently, a generalization of the Bohr-Rogosinki inequality \eqref{BRogo} has been studied by Kumar and Sahoo \cite{KS21}, and Liu et al. \cite{LLP}.

\section{\bf Preliminaries}\label{Pre}
To state our main results, we need some preparation. We set
$$
\mathcal{B}_0=\{w\in \mathcal{B}:w(0)=0\}=\cup_{n=1}^{\infty}\mathcal{B}_n,
$$
where
$$
\mathcal{B}_n=\{w\in \mathcal{B}:w(0)=\cdots =w^{(n-1)}(0)=0 \text{ and } w^{(n)}(0)\ne 0\} \text{ for $n\in \mathbb{N}$}.
$$
The members of the class $\mathcal{B}_0$ are called the {\em Schwarz functions}.

\subsection{Bohr phenomenon and Bohr-Rogosinski phenomenon}

\medskip
\noindent
\vspace*{0.2cm}

For any two analytic functions $f$ and $g$ in the unit disk $\mathbb{D}$, we say that the function $g$ is {\em subordinate} to $f$, denoted by $g\prec f$ in $\mathbb{D}$, if there exist an $w\in \mathcal{B}$ with $w(0)=0$ and $g(z)=f(w(z))$ for $z\in \mathbb{D}$. Moreover, it is well known that if $f$ is univalent in $\mathbb{D}$, then $g\prec f$ if, and only if, $f(0)=g(0)$ and $f(\mathbb{D})\subset g(\mathbb{D})$. By the Schwarz lemma, it follows that
 $$
|g'(0)|=|f'(w(0))w'(0)|\le |f'(0)|.
$$

\medskip

The concept of Bohr phenomenon for the family of functions which are defined by subordination was introduced by Abu-Muhanna \cite{Abu}.
Now for a given analytic function  $f$ from $\mathbb{D}$ onto $f(\mathbb{D})$ with the expansion
\begin{equation}\label{f}
	f(z)=\sum_{n=0}^{\infty}a_nz^n,
\end{equation}
 let $S(f)=\{g:g\prec f\}$. We say that the family $S(f)$ has a {\em Bohr phenomenon} if there exists an $r_f,\,0<r_f\le 1$, such that whenever 
\begin{equation}\label{g}
	g(z)=\sum_{n=0}^{\infty}b_nz^n \in S(f),
\end{equation}
 then  
\begin{equation}\label{Bohrp}
\sum_{n=1}^{\infty}|b_n|r^n\le d(f(0),\partial f(\mathbb{D})) \mbox{ for $|z|=r\le r_f$}.
\end{equation}
 We observe that if $f(z)=(a_0-z)/(1-\overline{a_0}z)$ with $|a_0|<1$, then $f(\mathbb{D})=\mathbb{D},\,S(f)=\mathcal{B},$ and $d(f(0),\partial f(\mathbb{D}))=1-|f(0)|=1-|a_0|$ so that \eqref{Bohrp} holds with $r_f=1/3$ in view of Theorem A. For univalent functions $f$, Abu-Muhanna \cite{Abu} showed that $S(f)$ has a Bohr phenomenon and the Bohr radius is $3-2\sqrt{2}\approx 0.17157$. 

\medskip

Similar to Bohr's inequality, the Bohr-Rogosinski inequality can be generalized to the family of analytic functions $f$ in $\mathbb{D}$ which take values in a given domain $f(\mathbb{D})$. We say that the family $S(f)$ has {\em Bohr-Rogosinski phenomenon} if there exists $r_N^{f}\in(0,1]$ such that for any $g\in S(f)$ the inequality:
\begin{equation}\label{eqBR}
|g(z)|+\sum_{n=N}^{\infty}|b_n|r^n\le |f(0)|+d(f(0),\partial f(\mathbb{D}))
\end{equation}
holds for $|z|=r\le r^N_{f}$. The largest such $r^N_{f}$ is called the Bohr-Rogosinski radius.

\subsection{Bohr-Rogosinski type inequalities involving Schwarz functions}

\medskip
\noindent
\vspace*{0.2cm}

The Bohr-Rogosinski inequality \eqref{BRogo} was also generalized by replacing the Taylor coefficients $a_n$ partly or completely by higher order derivative in \cite{KS21,LiuMS}. In this sequence, Liu \cite{Liu21} has  generalized several Bohr-Rogosinski's inequalities by replacing the Taylor coefficient $a_n$ of $f$ by $f^{(n)}(w_n(z))/n!$ and $r^m$ by $|w_m(z)|$ in part or in whole, where both $w_n$ and $w_m$ are some Schwarz functions. In particular, for $N\in \mathbb{N}$, $w_i\in \mathcal{B}_{m_i},\,m_i\in \mathbb{N} \,(i=0,1,2),$ and $w^*_n\in \mathcal{B}_{h(n)}$, Liu \cite{Liu21} has proved the following Bohr-Rogosinski type inequality: 
\begin{equation}\label{eqGBR}
	|f(w_0(z))|+|f'(w_1(z))||w_2(z)|+\sum_{n=N}^{\infty}|a_n|| w^*_n(z)|\le|f(0)|+ d(f(0),\partial f(\mathbb{D})) 
\end{equation}
for certain subclasses of analytic functions.
In context of the above problem, Liu \cite{Liu21} has also generalized the notion of Bohr-Rogosinski's phenomenon \eqref{eqBR} in terms of the Schwarz functions.
Let $f,g\in H(\mathbb{D})$ be of the form \eqref{f} and \eqref{g}, respectively. Then the family $S(f)$ has a {Bohr-Rogosinski's phenomenon} in terms of the Schwarz functions if there exists an $r_{f,m_0}^{N},\,0<r_{f,m_0}^{N}\le 1$, such that whenever $g\prec f$, we have 
\begin{equation}\label{eqGBRP}
	|g(w_0(z))|+\sum_{n=N}^{\infty}|b_n|r^n\le |f(0)|+d(f(0),\partial f(\mathbb{D}))
\end{equation}
for $w_0\in \mathcal{B}_{m_0},\, m_0\in \mathbb{N}$ and $|z|=r<r_{f,m_0}^{N}$.
Note that, the Bohr-Rogosinski inequality can be deduced from the Bohr-Rogosinski type inequality by choosing proper combination of the Schwarz functions. More precisely, if we put $w_0(z)=z,\, w^*_n(z)=z^n$, and let $m_2\to \infty$ in \eqref{eqGBR}, then it reduces to \eqref{BRogo}. If we choose $w_0(z)=z$ in \eqref{eqGBRP} then it reduces to \eqref{eqBR}. To find the recent developments in this context, 
we refer to \cite{GK,Kayponn21,LLP2020}.

\medskip

One of our main concern in this article is to deal with Bohr-Rogosinski's type inequalities of the form \eqref{eqGBR} and \eqref{eqGBRP} for concave-wedge domains. For the sake of simplification, we have used the following assumptions throughout this paper:
\begin{enumerate}[label=\Roman*.]
	\item $w_i\in \mathcal{B}_{m_i}$ for $m_i\in \mathbb{N} \,(i=0,1,2)$.
	\item $w^*_n\in \mathcal{B}_{h(n)}$, for $n\in \mathbb{N}$, where $h(n)$ is some function of $n$.
\end{enumerate}

\medskip

The outline of the paper is as follows. 
In Section \ref{concave analytic}, we generalize Bohr-Rogosinki's inequality using subordination and the Schwarz functions when image domain is a concave-wedge domain.
Furthermore, we shall state our main results and some of its consequences. In the same spirit, in Section \ref{Concave pole}, we consider generalized Bohr-Rogosinki's phenomenon in terms of the Schwarz functions for the family of meromorphic univalent functions which map open unit disk $\mathbb{D}$ into some concave domain.
The section \ref{proofs} contains the proof of our main results.

\medskip

The following lemma, given by Gangania and Kumar \cite{GK}, is needed in order to prove our result.  

\medskip

\begin{lemma}\label{lemma1}
	Let $f,g\in H(\mathbb{D})$ with the Taylor expansion \eqref{f} and \eqref{g} respectively. If $g\prec f$, then 
	$$
	\sum_{n=N}^{\infty}|b_n|r^n\le \sum_{n=N}^{\infty}|a_n|r^n,\,\ N\in \mathbb{N}
	$$
	holds for $|z|= r\le 1/3$.
\end{lemma}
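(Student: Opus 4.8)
The plan is to reduce the claim to the classical Bohr inequality (Theorem A) through the subordination representation. Since $g\prec f$, there is a Schwarz function $w\in\mathcal{B}_0$ with $g(z)=f(w(z))$, and the Schwarz lemma lets me write $w(z)=z\,\phi(z)$ with $\phi\in\mathcal{B}$. Substituting the expansion \eqref{f} of $f$ gives $g(z)=\sum_{k=0}^{\infty}a_k\,w(z)^k$; collecting powers of $z$ yields $b_0=a_0$ and, for $n\ge 1$,
\[
b_n=\sum_{k=1}^{n}a_k\,w_{n,k},\qquad w_{n,k}:=[z^n]\,w(z)^k,
\]
where $w_{n,k}=0$ whenever $n<k$. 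Everything then rests on a coefficient estimate for the powers $w^k$.

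The key step is the bound
\[
\sum_{n=k}^{\infty}|w_{n,k}|\,r^n\le r^k\qquad(k\ge 1,\ 0\le r\le \tfrac13).
\]
I would prove this by writing $w(z)^k=z^k\,\phi(z)^k$ with $\phi^k\in\mathcal{B}$, so that $w_{n,k}$ is exactly the $(n-k)$-th Taylor coefficient of $\phi^k$. Applying the classical Bohr inequality \eqref{CBI} to the bounded function $\phi^k$ shows that the sum of the moduli of its coefficients weighted by $r^m$ is at most $1$ precisely for $r\le 1/3$, and factoring out $z^k$ then produces the factor $r^k$. This is where the sharp constant $1/3$ enters the argument.

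Finally I would substitute the expansion of $b_n$, apply the triangle inequality, and interchange the (absolutely convergent, for $r\le 1/3$) order of summation:
\[
\sum_{n=N}^{\infty}|b_n|\,r^n\le\sum_{k=1}^{\infty}|a_k|\sum_{n=\max(N,k)}^{\infty}|w_{n,k}|\,r^n .
\]
For the indices $k\ge N$ the inner sum is at most $r^k$ by the key step, and these terms contribute exactly $\sum_{k\ge N}|a_k|\,r^k$, the desired right-hand side. The main obstacle is the bookkeeping for the low indices $1\le k<N$, where the inner sum $\sum_{n\ge N}|w_{n,k}|\,r^n$ begins above the natural starting index $k$; controlling these remaining contributions so that they do not disturb the tail-to-tail comparison is the delicate point, and the step I expect to require the most care.
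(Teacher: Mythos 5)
Your plan is the classical Bhowmik--Das argument, and everything up to the final step is correct: writing $w(z)=z\phi(z)$ with $\phi\in\mathcal{B}$, one has $\phi^k\in\mathcal{B}$, and the classical Bohr inequality applied to $\phi^k$ indeed gives $\sum_{n\ge k}|w_{n,k}|r^n\le r^k$ for $r\le 1/3$; interchanging the nonnegative double sum then yields the lemma in full when $N=1$. Note also that the paper offers no proof of its own to compare against: the lemma is quoted from Gangania and Kumar \cite{GK}, with the case $N=1$ attributed to Bhowmik and Das \cite{BD18}, so your attempt can only be judged on its own merits and on how the lemma is used later in the paper.

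The step you defer --- controlling $\sum_{k=1}^{N-1}|a_k|\sum_{n\ge N}|w_{n,k}|r^n$ --- is not a bookkeeping issue that more care can fix; it is a fatal obstruction, because the statement as given is false for every $N\ge 2$. Take $f(z)=z$ and $w(z)=z^N$, so that $g(z)=f(w(z))=z^N\prec f$. Then $\sum_{n=N}^{\infty}|b_n|r^n=r^N$, while $\sum_{n=N}^{\infty}|a_n|r^n=0$ because $a_n=0$ for all $n\ge 2$, and the claimed inequality fails for every $r\in(0,1/3]$. In your notation the failure lives exactly in the terms you could not control: $b_N=a_1w_{N,1}$ is manufactured entirely out of the low-index coefficient $a_1$, and the tail $\sum_{n\ge N}|a_n|r^n$ contains nothing that can dominate it. Hence no refinement of your estimates (nor any other argument) can close the gap; a true version requires either $N=1$ or additional hypotheses forcing the tail coefficients of $f$ to dominate the contributions of $a_1,\dots,a_{N-1}$. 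For instance, in the paper's actual application one has $f\in\widehat{C_0}(\alpha)$ with $|a_k|\le A_k|f'(0)|$ and the majorant coefficients $A_n$ of \eqref{Extremal} positive and non-decreasing, and what the proofs of Theorems \ref{Thm1} and \ref{Thm4} really need is a comparison against $|f'(0)|\sum_{n\ge N}A_nr^n$ rather than against $\sum_{n\ge N}|a_n|r^n$; any repair must exploit that structure. As written, your proposal is a complete and correct proof only of the case $N=1$.
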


\medskip

The case $N=1$ of Lemma \ref{lemma1} has been proved by Bhowmik and Das \cite[Lemma 1]{BD18}.

\section{\bf The family of concave univalent function with opening angle $\pi \alpha$}\label{concave analytic}
\noindent

For our further discussions we need to introduce the following family of univalent functions:
A function $f:\mathbb{D}\to \mathbb{C}$ is said to belong to the  family of {\em concave univalent functions with opening angle $\pi \alpha,\, \alpha\in [1,2]$,} at infinity if $f$ satisfies the following conditions:
\begin{enumerate}[label=(\alph*)]
	\item $f\in H(\mathbb{D})$ is univalent and $f(1)=\infty$.
	\item $f$ maps $\mathbb{D}$ conformally onto a set whose complement with respect to $\mathbb{C}$ is convex.
	\item The opening angle of $f(\mathbb{D})$ at $\infty$ is less than or equal to $\pi \alpha,\, \alpha\in [1,2]$.
\end{enumerate}
We denote this family of functions by $\widehat{C_0}(\alpha)$. For such functions, the boundary of $f(\mathbb{D})$ is contained in a wedge shaped region with opening angle $\pi \alpha$ but not in any bigger opening angle. It may be noted that
for $f\in \widehat{C_0}(\alpha),\,\alpha\in [1,2]$, the closed set $\mathbb{C}\setminus f(\mathbb{D})$ is convex and unbounded.
Also, we observe that $\widehat{C_0}(2)$ contains the classes $ \widehat{C_0}(\alpha),\,\alpha\in [1,2]$ (see \cite{AW05}).
 When $\alpha=1$, the image domain reduces to a convex half plane. Hence we can see that concave univalent functions are related to
the convex functions and every $f\in \widehat{C_0}(1)$ is the convex function. The case $\alpha=2$ yields a slit domain. The family of normalized concave univalent function are denoted by ${C_0}(\alpha)$, i.e., ${C_0}(\alpha):=\widehat{C_0}(\alpha)\cap \mathcal{S}$. In 2005, Avkhadiev and Wirths \cite{AW05} characterized functions in the class ${C_0}(\alpha)$. For this class Fekete-Szeg\"{o} problem has been solved by Bhowmik et al. \cite{BPW11}. Results related to Yamashita conjecture on Dirichlet finite integral for ${C_0}(\alpha)$ has been discussed by Abu-Muhanna and Ponnusamy \cite{AbuP17}. For a detailed discussion about concave functions, we refer to \cite{AW05,APW06,BD18,CP07,B12} and the references therein.

\medskip

The following lemma is actually contained in the proof of \cite[Theorem 1]{BD18} (see also \cite{AW05, B12}) as well and thus we are omitting the proof.
\noindent

\medskip

\begin{lemma}\label{lemmac0}
Let $f\in \widehat{C_0}(\alpha),\,\alpha\in [1,2],$ have the expansion \eqref{f}. Then we have the following inequalities:
\begin{enumerate}[label=(\roman*)]
\item $|f'(0)|\le 2\alpha \,d(f(0),\partial f(\mathbb{D}))$.
\item $|a_n|\le A_n |f'(0)|$, for $n\ge 1$.
\item $|f(z)-f(0)|\le |f'(0)|f_{\alpha}(r)$, where $f_{\alpha}(z)$ is define by
\begin{equation}\label{Extremal}
f_{\alpha}(z):=\cfrac{1}{2\alpha}\bigg[\bigg(\cfrac{1+z}{1-z}\bigg)^{\alpha}-1\bigg]=\sum_{n=1}^{\infty}A_nz^n.
\end{equation}
\end{enumerate}
All inequalities are sharp for the function $f_{\alpha}(z)$.
\end{lemma}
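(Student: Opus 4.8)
The plan is to reduce all three assertions to the normalized subclass $C_0(\alpha)$, treat the coefficient bound (ii) as the substantive step, deduce (iii) from it, and handle the covering estimate (i) separately. First I would normalize: given $f\in\widehat{C_0}(\alpha)$ with $f(0)=a_0$ and $f'(0)=a_1\neq 0$ (nonzero by univalence), set $\tilde f(z)=(f(z)-a_0)/a_1$. The affine map $w\mapsto (w-a_0)/a_1$ is a bijection of $\mathbb{C}$ preserving convexity of the complement, the opening angle at $\infty$, and the location of the pole at $z=1$, so $\tilde f\in C_0(\alpha)$ with $\tilde f(z)=z+\sum_{n\ge 2}\tilde a_n z^n$ and $\tilde a_n=a_n/a_1$. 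Under this normalization the three assertions become $d(0,\partial\tilde f(\mathbb{D}))\ge 1/(2\alpha)$, $|\tilde a_n|\le A_n$, and $|\tilde f(z)|\le f_\alpha(r)$ respectively, while the claimed sharpness reduces to the direct computations $f_\alpha'(0)=1$, $f_\alpha(z)=\sum A_n z^n$, and $d(0,\partial f_\alpha(\mathbb{D}))=1/(2\alpha)$ (the nearest boundary point being the wedge vertex $-1/(2\alpha)$).

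The heart of the matter is (ii), where I would invoke the Avkhadiev--Wirths characterization \cite{AW05}: $\tilde f\in C_0(\alpha)$ if and only if the function $P(z):=\tfrac{2}{\alpha-1}\bigl(\tfrac{\alpha+1}{2}\tfrac{1+z}{1-z}-1-z\tilde f''(z)/\tilde f'(z)\bigr)$ satisfies $\operatorname{Re}P>0$ and $P(0)=1$, the extremal being $P_{f_\alpha}(z)=(1-z)/(1+z)$. Writing $P(z)=1+\sum_{k\ge 1}p_k z^k$, one recovers $z\tilde f''/\tilde f'$ and hence $\log\tilde f'(z)=\sum_{k\ge 1}\tfrac{c_k}{k}z^k$ with $c_k=(\alpha+1)-\tfrac{\alpha-1}{2}p_k$, and then the coefficients of $\tilde f$ as explicit polynomials in the $c_k$. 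The sharp estimate $|\tilde a_n|\le A_n$ then follows from the Carath\'eodory--Toeplitz constraints on $(p_k)$, which is precisely the coefficient theorem of Avkhadiev--Pommerenke--Wirths \cite{APW06}. I expect this to be the main obstacle: the crude bound $|p_k|\le 2$ is \emph{not} sufficient, since it would permit $c_k=2\alpha$ for every $k$ (forcing $\tilde f'(z)=(1-z)^{-2\alpha}$) and already exceed $A_3$ when $\alpha>1$; one must use, for instance, that $p_1=-2$ forces $p_2=2$, and more generally the full positive semidefiniteness of the Toeplitz forms, to pin the extremum down to $f_\alpha$.

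Granting (ii), part (iii) is immediate: since each $A_n\ge 0$, one has $|\tilde f(z)|\le\sum_{n\ge 1}|\tilde a_n|r^n\le\sum_{n\ge 1}A_n r^n=f_\alpha(r)$, and multiplying through by $|a_1|=|f'(0)|$ restores the stated inequality. For (i) I would argue geometrically: $E:=\mathbb{C}\setminus\tilde f(\mathbb{D})$ is a closed convex wedge of opening angle at most $(2-\alpha)\pi$, and the covering theorem for $C_0(\alpha)$ --- that every such image contains the disk $\{|w|<1/(2\alpha)\}$, the vertex of the extremal wedge of $f_\alpha$ lying exactly at $-1/(2\alpha)$ --- yields $d(0,\partial\tilde f(\mathbb{D}))\ge 1/(2\alpha)$; this covering bound may also be quoted directly from \cite{AW05}. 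Collecting the three parts and observing that $f_\alpha$ realizes equality throughout completes the proof.
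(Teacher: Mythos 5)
Your proposal is correct and, in substance, coincides with the paper's own justification: the paper gives no proof of this lemma at all, stating that it is ``contained in the proof of \cite[Theorem 1]{BD18} (see also \cite{AW05, B12})'' --- that is, it defers exactly to the normalization $\tilde f=(f-f(0))/f'(0)\in C_0(\alpha)$ together with the known sharp coefficient bound $|\tilde a_n|\le A_n$ (Avkhadiev--Pommerenke--Wirths \cite{APW06}), the resulting growth estimate, and the covering bound $d(0,\partial \tilde f(\mathbb{D}))\ge 1/(2\alpha)$ for the normalized class, with $f_\alpha$ extremal throughout. Your write-up simply makes this reduction explicit and attributes the two substantive ingredients to the same sources, so it is a faithful reconstruction of the intended argument rather than a different route.
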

\noindent

\medskip

The following is our first main result, which estimate the Bohr-Rogosinki inequality \eqref{eqGBRP} for the family $\widehat{C_0}(\alpha)$.

\medskip

\begin{theorem}\label{Thm1}
	Let $f,g\in H(\mathbb{D})$, with the Taylor expansion \eqref{f} and \eqref{g} respectively, such that $f\in\widehat{C_0}(\alpha),\,\alpha\in [1,2]$ and $g\in S(f)$. Then, for each $N\in \mathbb{N}$, the inequality
	$$
	|g(w_0(z))|+\sum_{n=  N}^{\infty} |b_n|r^n\le |f(0)|+ d(f(0),\partial f(\mathbb{D})) 
	$$
	holds for $|z|=r<\min\{r_{\alpha,m_0}^{ N},1/3\}$, where $r_{\alpha,m_0}^{ N}$ is the positive root of the equation $F_{\alpha,m_0}^{ N}(x)=0$,
\begin{equation}\label{eqcsub3}
F_{\alpha,m_0}^{ N}(x):=\sum_{n=  N}^{\infty} A_nx^n+f_{\alpha}(x^{m_0})-\cfrac{1}{2\alpha},
\end{equation}
in $(0,1)$. If $r_{\alpha,m_0}^{ N} \le 1/3$, the radius $r_{\alpha,m_0}^{ N}$ is sharp for the function $f_{\alpha}$ defined in \eqref{Extremal}. 
\end{theorem}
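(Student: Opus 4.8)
The plan is to reduce the whole inequality to the single scalar condition $F_{\alpha,m_0}^{N}(r)\le 0$ by estimating the two pieces of the left-hand side separately, and then to analyse the function $F_{\alpha,m_0}^{N}$ on $(0,1)$. Throughout I would write $d=d(f(0),\partial f(\mathbb{D}))$, noting $d>0$ since $f(0)$ is an interior point of the open set $f(\mathbb{D})$.

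First I would bound the term $|g(w_0(z))|$. Since $g\in S(f)$ means $g\prec f$, there is a Schwarz function $\omega\in\mathcal{B}_0$ with $g=f\circ\omega$, so that $g(w_0(z))=f\left(\omega(w_0(z))\right)$. The composition $\psi:=\omega\circ w_0$ is again a Schwarz function (as $\psi(0)=\omega(w_0(0))=0$), and because $w_0\in\mathcal{B}_{m_0}$ the Schwarz lemma gives $|w_0(z)|\le|z|^{m_0}=r^{m_0}$, whence $|\psi(z)|\le|w_0(z)|\le r^{m_0}$. The coefficients $A_n$ of $f_\alpha$ are positive, since $\left(\tfrac{1+z}{1-z}\right)^{\alpha}=\exp\left(2\alpha\sum_{k\ge0}z^{2k+1}/(2k+1)\right)$ has non-negative Taylor coefficients, so $f_\alpha$ is increasing on $[0,1)$. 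Combining this monotonicity with Lemma \ref{lemmac0}(iii) applied at the point $\psi(z)$ yields
\begin{equation*}
|g(w_0(z))|\le |f(0)|+|f'(0)|\,f_\alpha(|\psi(z)|)\le |f(0)|+|f'(0)|\,f_\alpha(r^{m_0}).
\end{equation*}

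Next I would bound the tail. For $r\le 1/3$, Lemma \ref{lemma1} gives $\sum_{n=N}^{\infty}|b_n|r^n\le\sum_{n=N}^{\infty}|a_n|r^n$, and Lemma \ref{lemmac0}(ii) then gives $\sum_{n=N}^{\infty}|a_n|r^n\le|f'(0)|\sum_{n=N}^{\infty}A_nr^n$. Adding the two estimates and invoking Lemma \ref{lemmac0}(i), namely $|f'(0)|\le 2\alpha\,d$, I obtain
\begin{equation*}
|g(w_0(z))|+\sum_{n=N}^{\infty}|b_n|r^n\le |f(0)|+2\alpha\,d\left(f_\alpha(r^{m_0})+\sum_{n=N}^{\infty}A_nr^n\right).
\end{equation*}
Since $d>0$, the desired bound $|f(0)|+d$ follows precisely when $2\alpha\left(f_\alpha(r^{m_0})+\sum_{n\ge N}A_nr^n\right)\le 1$, i.e. when $F_{\alpha,m_0}^{N}(r)\le 0$.

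It then remains to locate the threshold and to check sharpness. Because every $A_n>0$, the function $F_{\alpha,m_0}^{N}$ is strictly increasing on $(0,1)$, with $F_{\alpha,m_0}^{N}(0)=-1/(2\alpha)<0$ and $F_{\alpha,m_0}^{N}(x)\to+\infty$ as $x\to1^-$ (since $f_\alpha(x^{m_0})\to\infty$); hence it has a unique root $r_{\alpha,m_0}^{N}\in(0,1)$ and $F_{\alpha,m_0}^{N}(r)<0$ for all $r<r_{\alpha,m_0}^{N}$. Together with the restriction $r\le 1/3$ needed for Lemma \ref{lemma1}, this proves the inequality for $r<\min\{r_{\alpha,m_0}^{N},1/3\}$. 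For sharpness, assuming $r_{\alpha,m_0}^{N}\le1/3$, I would take $f=g=f_\alpha$ (so $b_n=a_n=A_n$, $|f(0)|=0$ and $d=1/(2\alpha)$ by the equality case of Lemma \ref{lemmac0}(i)), choose $w_0(z)=z^{m_0}\in\mathcal{B}_{m_0}$, and evaluate at $z=r\in(0,1)$; then every estimate above becomes an equality, the left-hand side equals $f_\alpha(r^{m_0})+\sum_{n\ge N}A_nr^n=F_{\alpha,m_0}^{N}(r)+1/(2\alpha)$ and the right-hand side equals $1/(2\alpha)$, so equality holds exactly at $r=r_{\alpha,m_0}^{N}$ and the inequality fails for $r$ slightly larger. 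I expect the only genuinely delicate point to be the first step, namely tracking the subordination through the composition $\omega\circ w_0$ and justifying $|\psi(z)|\le r^{m_0}$ from the order-$m_0$ vanishing of $w_0$; everything afterwards is an assembly of the three parts of Lemma \ref{lemmac0} with Lemma \ref{lemma1} and an elementary monotonicity analysis.
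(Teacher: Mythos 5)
Your proposal is correct and follows essentially the same route as the paper's own proof: the same decomposition into a growth estimate for $|g(w_0(z))|$ (via Lemma \ref{lemmac0} and the Schwarz lemma) plus the tail bound from Lemma \ref{lemma1} and Lemma \ref{lemmac0}(ii), the same reduction to $F_{\alpha,m_0}^{N}(r)\le 0$ with the same monotonicity/root analysis, and the same sharpness example $f=g=f_\alpha$ with $w_0(z)=z^{m_0}$. The only difference is cosmetic: you derive the growth bound for $g$ by explicitly composing the subordination Schwarz function with $w_0$, where the paper cites the growth theorem for subordinates directly.
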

\noindent

\begin{remark}
Choosing $m_0\to \infty$ and $ N=1$ in Theorem \ref{Thm1}, we obtain the Bohr-Rogosinski radius
$$
r_{\alpha,\infty}^{1}=\frac{2^{1/\alpha}-1}{2^{1/\alpha}+1}
$$ and hence Theorem \ref{Thm1} coincides with \cite[Theorem 1]{BD18}. Moreover, for $\alpha=1$, it readily follows that the Bohr radius for the class $\widehat{C_0}(1)$, which is the class of convex functions, is $1/3$ and for $\alpha=2$ the Bohr radius for the class $\widehat{C_0}(2)$ is $3-2\sqrt{2}$.
\end{remark}
\noindent

\medskip

Since $\widehat{C_0}(1)$ contains in the family of univalent convex functions, the following corollary, for $\alpha=1$, covers the particular case of \cite[Corollary 1]{Liu21}.  

\medskip

\begin{corollary}
	Let $f,g\in H(\mathbb{D})$, with the Taylor expansion \eqref{f} and \eqref{g} respectively, such that $f\in\widehat{C_0}(1)$ and $g\in S(f)$. Then, for each $N\in \mathbb{N}$, the inequality
$$
|g(w_0(z))|+\sum_{n=  N}^{\infty} |b_n|r^n\le |f(0)|+ d(f(0),\partial f(\mathbb{D})) 
$$
holds for $|z|=r<\min\{r_{1,m_0}^{ N},1/3\}$, where $r_{1,m_0}^{ N}$ is the positive root of the equation
$$
F_{1,m_0}^{ N}(x)=\sum_{n=  N}^{\infty} x^n+\cfrac{r^{m_0}}{1-r^{m_0}}-\cfrac{1}{2}=0,
$$
in $(0,1)$. If $r_{1,m_0}^{ N} \le 1/3$, the radius $r_{1,m_0}^{ N}$ is sharp for the function $f_{1}$ defined in \eqref{Extremal}. 
\end{corollary}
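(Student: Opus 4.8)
The plan is to estimate the two terms on the left-hand side separately against $d(f(0),\partial f(\mathbb{D}))$, and then observe that the resulting sufficient condition is exactly $F_{\alpha,m_0}^N(r)\le 0$. Since $g\in S(f)$, I would write $g=f\circ\omega$ for some Schwarz function $\omega\in\mathcal{B}_0$, so that $g(0)=f(0)$ and $g(w_0(z))=f(\omega(w_0(z)))$. For the first term,
\[
|g(w_0(z))|\le |f(0)|+|f(\omega(w_0(z)))-f(0)|,
\]
and I would apply Lemma \ref{lemmac0}(iii) with argument $\zeta=\omega(w_0(z))$. The key point is to control $|\zeta|$: because $w_0\in\mathcal{B}_{m_0}$, the Schwarz lemma applied to $w_0(z)/z^{m_0}$ gives $|w_0(z)|\le r^{m_0}$, and since $\omega$ is itself a Schwarz function, $|\zeta|=|\omega(w_0(z))|\le|w_0(z)|\le r^{m_0}$. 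As $f_\alpha$ has nonnegative Taylor coefficients it is increasing on $[0,1)$, so Lemma \ref{lemmac0}(iii) and (i) yield
\[
|f(\omega(w_0(z)))-f(0)|\le |f'(0)|\,f_\alpha(r^{m_0})\le 2\alpha\,d(f(0),\partial f(\mathbb{D}))\,f_\alpha(r^{m_0}).
\]

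For the tail sum I would combine Lemma \ref{lemma1} (valid for $r\le 1/3$) with Lemma \ref{lemmac0}(ii) and (i):
\[
\sum_{n=N}^\infty|b_n|r^n\le\sum_{n=N}^\infty|a_n|r^n\le |f'(0)|\sum_{n=N}^\infty A_n r^n\le 2\alpha\,d(f(0),\partial f(\mathbb{D}))\sum_{n=N}^\infty A_n r^n.
\]
Adding the two estimates, the target inequality follows once
\[
2\alpha\Big(f_\alpha(r^{m_0})+\sum_{n=N}^\infty A_n r^n\Big)\le 1,
\]
that is, once $F_{\alpha,m_0}^N(r)\le 0$. Since $F_{\alpha,m_0}^N(0)=-\tfrac{1}{2\alpha}<0$, $F_{\alpha,m_0}^N(x)\to+\infty$ as $x\to 1^-$, and $F_{\alpha,m_0}^N$ is strictly increasing (the coefficients $A_n$ are nonnegative and $f_\alpha(x^{m_0})$ is strictly increasing), it has a unique root $r_{\alpha,m_0}^N\in(0,1)$ and is negative below it. Intersecting with the constraint $r\le 1/3$ forced by Lemma \ref{lemma1} gives the stated radius $\min\{r_{\alpha,m_0}^N,1/3\}$.

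For sharpness (when $r_{\alpha,m_0}^N\le 1/3$), I would take $f=g=f_\alpha$, so $\omega=\mathrm{id}$, choose $w_0(z)=z^{m_0}\in\mathcal{B}_{m_0}$, and evaluate at $z=r\in(0,1)$. Then $f_\alpha(0)=0$ and equality in Lemma \ref{lemmac0}(i) gives $d(f_\alpha(0),\partial f_\alpha(\mathbb{D}))=\tfrac{1}{2\alpha}$, while the left-hand side equals exactly $f_\alpha(r^{m_0})+\sum_{n=N}^\infty A_n r^n$. The inequality then reads $F_{\alpha,m_0}^N(r)\le 0$, with equality precisely at $r=r_{\alpha,m_0}^N$, so no larger radius can work.

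The computation is otherwise routine; the only genuinely delicate step is the reduction of $|g(w_0(z))|$, where one must peel off $f(0)$, pass through the subordination $g=f\circ\omega$, and chain the two Schwarz-type estimates $|\omega(w_0(z))|\le|w_0(z)|\le r^{m_0}$ correctly, so that the growth bound of Lemma \ref{lemmac0}(iii) is evaluated at $r^{m_0}$ rather than at $r$.
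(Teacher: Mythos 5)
Your proposal is correct and follows essentially the same route as the paper: the paper obtains this corollary as the $\alpha=1$ case of Theorem \ref{Thm1}, whose proof likewise bounds $|g(w_0(z))|$ via the subordination/growth estimate $|g(w_0(z))|\le|f(0)|+2\alpha\,d(f(0),\partial f(\mathbb{D}))f_\alpha(r^{m_0})$, bounds the tail via Lemma \ref{lemma1} together with Lemma \ref{lemmac0}(i)--(ii) for $r\le 1/3$, and then reduces everything to $F_{\alpha,m_0}^{N}(r)\le 0$ with the same monotonicity and sharpness argument for $f=g=f_\alpha$, $w_0(z)=z^{m_0}$. Your write-up is in fact slightly more explicit than the paper's in chaining the two Schwarz estimates $|\omega(w_0(z))|\le|w_0(z)|\le r^{m_0}$, but the content is identical.
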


\medskip

We observe that the case $w_0(z)=z$ in Theorem \ref{Thm1} gives the following corollary.

\medskip

\begin{corollary}
	Let $f,g\in H(\mathbb{D})$, with the Taylor expansion \eqref{f} and \eqref{g} respectively, such that $f\in\widehat{C_0}(\alpha),\,\alpha\in [1,2]$, $g\in S(f)$. Then for each $N\in \mathbb{N}$ the inequality
	$$
	|g(z)|+\sum_{n=  N}^{\infty} |b_n|r^n\le |f(0)|+ d(f(0),\partial f(\mathbb{D})) 
	$$
	holds for $|z|=r<\min\{r_{\alpha,1}^{ N},1/3\}$, where $r_{\alpha,1}^{ N}$ is the positive root of the equation $F_{\alpha,1}^{ N}=0$ is defined in \eqref{eqcsub3}. If $r_{\alpha,1}^{ N} \le 1/3$, the radius $r_{\alpha,1}^{ N}$ is sharp for the function $f_{\alpha}$ given by \eqref{Extremal}. 
\end{corollary}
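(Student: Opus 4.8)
The plan is to derive this statement as the specialization $w_0(z)=z$ of Theorem \ref{Thm1}: the identity map lies in $\mathcal{B}_1$, so it corresponds to $m_0=1$, and $F_{\alpha,1}^{N}$ is precisely the function \eqref{eqcsub3} evaluated at $m_0=1$, namely $F_{\alpha,1}^{N}(x)=\sum_{n=N}^{\infty}A_nx^n+f_{\alpha}(x)-1/(2\alpha)$. I will nonetheless run the argument directly so the role of $F_{\alpha,1}^{N}$ is transparent. First, since $g\prec f$, I write $g=f\circ\phi$ for a Schwarz function $\phi\in\mathcal{B}_0$, so $g(z)=f(\phi(z))$ and
$$
|g(z)|\le |f(0)|+|f(\phi(z))-f(0)|.
$$
Applying Lemma \ref{lemmac0}(iii) at the point $\phi(z)\in\mathbb{D}$ gives $|f(\phi(z))-f(0)|\le |f'(0)|\,f_{\alpha}(|\phi(z)|)$. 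The Schwarz lemma yields $|\phi(z)|\le|z|=r$, and because $f_{\alpha}$ is increasing on $[0,1)$ (its coefficients $A_n$ are nonnegative), I conclude $|g(z)|\le |f(0)|+|f'(0)|\,f_{\alpha}(r)$.

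For the tail I invoke Lemma \ref{lemma1}: since $g\prec f$ and $r\le 1/3$, we have $\sum_{n=N}^{\infty}|b_n|r^n\le \sum_{n=N}^{\infty}|a_n|r^n$, and Lemma \ref{lemmac0}(ii) bounds $|a_n|\le A_n|f'(0)|$, so $\sum_{n=N}^{\infty}|b_n|r^n\le |f'(0)|\sum_{n=N}^{\infty}A_nr^n$. Adding the two estimates,
$$
|g(z)|+\sum_{n=N}^{\infty}|b_n|r^n\le |f(0)|+|f'(0)|\Big(f_{\alpha}(r)+\sum_{n=N}^{\infty}A_nr^n\Big).
$$
Now Lemma \ref{lemmac0}(i) supplies $|f'(0)|\le 2\alpha\,d(f(0),\partial f(\mathbb{D}))$, so the desired bound $|g(z)|+\sum_{n=N}^{\infty}|b_n|r^n\le |f(0)|+d(f(0),\partial f(\mathbb{D}))$ follows once $2\alpha\big(f_{\alpha}(r)+\sum_{n=N}^{\infty}A_nr^n\big)\le 1$, i.e. once $F_{\alpha,1}^{N}(r)\le 0$. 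Since $F_{\alpha,1}^{N}$ has nonnegative coefficients, is strictly increasing on $(0,1)$, satisfies $F_{\alpha,1}^{N}(0)=-1/(2\alpha)<0$ and tends to $+\infty$ as $x\to 1^-$, it has a unique positive root $r_{\alpha,1}^{N}$, and $F_{\alpha,1}^{N}(r)\le 0$ exactly for $r\le r_{\alpha,1}^{N}$. Intersecting with the constraint $r\le 1/3$ imposed by Lemma \ref{lemma1} gives the radius $\min\{r_{\alpha,1}^{N},1/3\}$.

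For sharpness when $r_{\alpha,1}^{N}\le 1/3$, I would test the extremal function by taking $g=f=f_{\alpha}$ (so $\phi(z)=z$) and evaluating at a real point $z=r\in(0,1)$. Here $f_{\alpha}(0)=0$, the coefficients satisfy $b_n=a_n=A_n\ge 0$, and equality holds in Lemma \ref{lemmac0}(i), so $d(f_{\alpha}(0),\partial f_{\alpha}(\mathbb{D}))=1/(2\alpha)$. The left-hand side then equals $f_{\alpha}(r)+\sum_{n=N}^{\infty}A_nr^n$ and the right-hand side equals $1/(2\alpha)$, so the inequality reduces exactly to $F_{\alpha,1}^{N}(r)\le 0$; equality is attained at $r=r_{\alpha,1}^{N}$ and fails for $r$ slightly larger, which proves sharpness. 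The step requiring the most care is the interaction of the two radii: every coefficient bound in Lemma \ref{lemmac0} is saturated by $f_{\alpha}$, but Lemma \ref{lemma1} is available only for $r\le 1/3$, so the sharpness assertion is legitimate precisely in the regime $r_{\alpha,1}^{N}\le 1/3$. Verifying that the single configuration $g=f=f_{\alpha}$ simultaneously saturates the distance estimate, the coefficient estimates, and the subordination step is the point I would check most carefully.
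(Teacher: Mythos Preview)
Your argument is correct and follows exactly the route the paper takes: the corollary is obtained by specializing Theorem~\ref{Thm1} to $w_0(z)=z$ (so $m_0=1$), and the details you write out---growth estimate via Lemma~\ref{lemmac0}(iii) together with the Schwarz lemma, the tail bound via Lemma~\ref{lemma1} and Lemma~\ref{lemmac0}(ii), and the distance estimate Lemma~\ref{lemmac0}(i)---are precisely the ingredients of the proof of Theorem~\ref{Thm1}. The sharpness test with $g=f=f_{\alpha}$ at $z=r$ is also the paper's choice, and your observation that the restriction $r_{\alpha,1}^{N}\le 1/3$ is needed because Lemma~\ref{lemma1} only applies for $r\le 1/3$ matches the statement exactly.
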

\noindent

\medskip

\begin{theorem}\label{Thm2}
	Let $f\in \widehat{C_0}(\alpha),\,\alpha\in [1,2]$, with the Taylor expansion \eqref{f}. Then for each $N\in \mathbb{N}$, we have
	\begin{equation*}
		|f(w_0(z))|+|f'(w_1(z))||w_2(z)|+\sum_{n= N}^{\infty}|a_n|| w^*_n(z)|\le|f(0)|+ d(f(0),\partial f(\mathbb{D})) 
	\end{equation*}
	for $|z|=r\le r_{\alpha,m_0,m_1,m_2}^{{N}}$, where $r_{\alpha,m_0,m_1,m_2}^{{N}}\in(0,1)$ is the unique positive root of the equation $K_{\alpha,m_0,m_1,m_2}^{{N}}(x)=0$,
	\begin{equation}\label{eqC2}
		K_{\alpha,m_0,m_1,m_2}^{{N}}(x):=\sum_{n=N}^{\infty}A_nx^{h(n)}+f_{\alpha}(x^{m_0})+x^{m_2}\cfrac{(1+x^{m_1})^{\alpha-1}}{(1-x^{m_1})^{\alpha+1}}-\cfrac{1}{2\alpha}
	\end{equation}
and $A_n$ is given in \eqref{Extremal}. The radius $r_{\alpha,m_0,m_1,m_2}^{{N}}$ cannot be improved. 
\end{theorem}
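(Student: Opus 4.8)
\emph{Proof proposal.} The plan is to bound each of the three quantities on the left-hand side separately, in terms of $|f'(0)|$ and the single radius $r$, and then to collapse everything onto the function $K_{\alpha,m_0,m_1,m_2}^{N}$ using the distortion data of Lemma \ref{lemmac0}. The only input about the Schwarz functions that I will use is the elementary fact that if $w\in\mathcal{B}_m$ then $w/z^m$ is a bounded analytic function on $\mathbb{D}$, whence by the maximum principle $|w(z)|\le r^{m}$ for $|z|=r<1$; in particular $|w_i(z)|\le r^{m_i}$ for $i=0,1,2$ and $|w_n^*(z)|\le r^{h(n)}$. Throughout I will use that $f_{\alpha}$ has nonnegative Taylor coefficients $A_n$ for $\alpha\in[1,2]$, so that $f_{\alpha}$ and $f_{\alpha}'$ are increasing on $[0,1)$ and agree there with $\sum A_n x^{n}$ and $\sum nA_n x^{n-1}$ evaluated at the real point $x$.

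For the first term I would write $|f(w_0(z))|\le|f(0)|+|f(w_0(z))-f(0)|$ and apply part (iii) of Lemma \ref{lemmac0} at the point $w_0(z)$; since $f_{\alpha}$ is increasing and $|w_0(z)|\le r^{m_0}$ this gives $|f(w_0(z))|\le|f(0)|+|f'(0)|\,f_{\alpha}(r^{m_0})$. For the second term the key step is a distortion estimate: from part (ii), $|a_n|\le A_n|f'(0)|$, so termwise
\[
|f'(z)|\le\sum_{n=1}^{\infty}n|a_n||z|^{n-1}\le|f'(0)|\sum_{n=1}^{\infty}nA_n|z|^{n-1}=|f'(0)|\,f_{\alpha}'(|z|),
\]
and a direct computation gives $f_{\alpha}'(x)=(1+x)^{\alpha-1}/(1-x)^{\alpha+1}$. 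Combining this (evaluated at $|w_1(z)|\le r^{m_1}$) with $|w_2(z)|\le r^{m_2}$ yields $|f'(w_1(z))||w_2(z)|\le|f'(0)|\,r^{m_2}(1+r^{m_1})^{\alpha-1}/(1-r^{m_1})^{\alpha+1}$. For the third term, part (ii) together with $|w_n^*(z)|\le r^{h(n)}$ gives $\sum_{n=N}^{\infty}|a_n||w_n^*(z)|\le|f'(0)|\sum_{n=N}^{\infty}A_nr^{h(n)}$.

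Adding the three bounds, the left-hand side is at most $|f(0)|+|f'(0)|\,G(r)$, where $G(r)=K_{\alpha,m_0,m_1,m_2}^{N}(r)+\tfrac{1}{2\alpha}$ collects the first three terms of $K_{\alpha,m_0,m_1,m_2}^{N}$. By part (i), $d(f(0),\partial f(\mathbb{D}))\ge|f'(0)|/(2\alpha)$, so after dividing by $|f'(0)|\neq0$ it suffices to prove $G(r)\le 1/(2\alpha)$, i.e. $K_{\alpha,m_0,m_1,m_2}^{N}(r)\le0$. Here I would record that $K_{\alpha,m_0,m_1,m_2}^{N}$ is continuous and strictly increasing on $(0,1)$ (each summand is increasing, using $A_n\ge0$ and $\alpha+1>0$), that $K_{\alpha,m_0,m_1,m_2}^{N}(0)=-1/(2\alpha)<0$, and that it tends to $+\infty$ as $x\to1^-$ because of the factor $(1-x^{m_1})^{-(\alpha+1)}$. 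Hence it has a unique zero $r_{\alpha,m_0,m_1,m_2}^{N}\in(0,1)$ and $K_{\alpha,m_0,m_1,m_2}^{N}(r)\le0$ precisely for $r\le r_{\alpha,m_0,m_1,m_2}^{N}$, which proves the inequality.

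Finally, for sharpness I would take $f=f_{\alpha}$ and the diagonal Schwarz functions $w_i(z)=z^{m_i}$ $(i=0,1,2)$ and $w_n^*(z)=z^{h(n)}$, evaluating at $z=r\in(0,1)$. Because all coefficients $A_n$ are nonnegative and all arguments are positive reals, every inequality used above becomes an equality, and since $f_{\alpha}(0)=0$, $f_{\alpha}'(0)=1$, and $d(f_{\alpha}(0),\partial f_{\alpha}(\mathbb{D}))=1/(2\alpha)$ (the equality case of part (i)), the left-hand side equals $K_{\alpha,m_0,m_1,m_2}^{N}(r)+1/(2\alpha)$ while the right-hand side equals $1/(2\alpha)$. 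Thus the inequality holds with equality exactly at $r=r_{\alpha,m_0,m_1,m_2}^{N}$ and fails immediately beyond it, so the radius cannot be improved. I expect the one genuinely substantive point to be the distortion bound $|f'(z)|\le|f'(0)|\,f_{\alpha}'(|z|)$ together with the nonnegativity of the coefficients $A_n$, on which both the monotonicity of $K_{\alpha,m_0,m_1,m_2}^{N}$ and the sharpness argument rest; the remaining estimates are then routine.
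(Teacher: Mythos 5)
Your proof is correct and follows essentially the same route as the paper: the same three-term decomposition, the same reduction via $|f'(0)|\le 2\alpha\, d(f(0),\partial f(\mathbb{D}))$ to the inequality $K_{\alpha,m_0,m_1,m_2}^{N}(r)\le 0$, the same monotonicity/root argument, and the same extremal configuration $f=f_{\alpha}$, $w_i(z)=z^{m_i}$, $w_n^*(z)=z^{h(n)}$ for sharpness. The only difference is cosmetic: where the paper cites the distortion theorem \cite[Corollary 2.3]{B12} for the bound on $|f'(w_1(z))|$, you rederive it termwise from the coefficient bounds of Lemma \ref{lemmac0}(ii) together with the identity $f_{\alpha}'(x)=(1+x)^{\alpha-1}/(1-x)^{\alpha+1}$, which is a valid self-contained substitute since the coefficients $A_n$ are nonnegative.
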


\medskip

If we allow $m_2$ tends to infinity in Theorem \ref{Thm2}, then it leads to the following result.

\medskip

\begin{corollary}\label{Coro1}
	For $\alpha\in [1,2]$, let $f\in  \widehat{C_0}(\alpha)$ with the Taylor expansion \eqref{f}. Then for each $N\in \mathbb{N}$, we have
	$$
	|f(w_0(z))|+\sum_{n=N}^{\infty}|a_n|| w^*_n(z)|\le|f(0)|+ d(f(0),\partial f(\mathbb{D}))
	$$
	for $|z|=r\le r_{\alpha,m_0,m_1,\infty}^{{N}}$, where $r_{\alpha,m_0,m_1,\infty}^{{N}}\in (0,1)$ is the unique positive root of the equation $K_{\alpha,m_0,m_1,\infty}^{{N}}(x)=0$ defined in \eqref{eqC2}.
	 The radius $r_{\alpha,m_0,m_1,\infty}^{{N}}$ cannot be improved.
\end{corollary}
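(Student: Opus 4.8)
The plan is to prove the corollary directly, observing that as $m_2 \to \infty$ the third summand $x^{m_2}(1+x^{m_1})^{\alpha-1}/(1-x^{m_1})^{\alpha+1}$ in \eqref{eqC2} tends to $0$ for every $x\in(0,1)$, so the defining equation reduces to $K_{\alpha,m_0,m_1,\infty}^{N}(x)=\sum_{n=N}^{\infty}A_nx^{h(n)}+f_{\alpha}(x^{m_0})-1/(2\alpha)=0$, which no longer depends on $m_1$ (a point worth flagging, since the notation retains $m_1$). The argument follows the scheme of Theorem \ref{Thm2} but without the derivative term, and rests entirely on Lemma \ref{lemmac0}.

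First I would bound the two surviving pieces. Since $w_0\in\mathcal{B}_{m_0}$, the Schwarz lemma gives $|w_0(z)|\le r^{m_0}$, and because $f_{\alpha}$ has nonnegative Taylor coefficients $A_n$ it is increasing on $[0,1)$; combining this with Lemma \ref{lemmac0}(iii) yields
\[|f(w_0(z))| \le |f(0)| + |f'(0)|\, f_{\alpha}(|w_0(z)|) \le |f(0)| + |f'(0)|\, f_{\alpha}(r^{m_0}).\]
For the series, Lemma \ref{lemmac0}(ii) gives $|a_n|\le A_n|f'(0)|$, while $w_n^*\in\mathcal{B}_{h(n)}$ gives $|w_n^*(z)|\le r^{h(n)}$, whence
\[\sum_{n=N}^{\infty}|a_n|\,|w_n^*(z)| \le |f'(0)|\sum_{n=N}^{\infty}A_n r^{h(n)}.\]

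Next I would invoke Lemma \ref{lemmac0}(i), namely $|f'(0)|\le 2\alpha\, d(f(0),\partial f(\mathbb{D}))$, to combine the two bounds into
\[|f(w_0(z))| + \sum_{n=N}^{\infty}|a_n|\,|w_n^*(z)| \le |f(0)| + 2\alpha\, d(f(0),\partial f(\mathbb{D}))\Big[f_{\alpha}(r^{m_0}) + \sum_{n=N}^{\infty}A_n r^{h(n)}\Big].\]
The right-hand side is at most $|f(0)|+d(f(0),\partial f(\mathbb{D}))$ exactly when the bracket is $\le 1/(2\alpha)$, i.e. when $K_{\alpha,m_0,m_1,\infty}^{N}(r)\le 0$. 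To see this defines a genuine radius, I would note $K_{\alpha,m_0,m_1,\infty}^{N}(0)=-1/(2\alpha)<0$, that $K_{\alpha,m_0,m_1,\infty}^{N}$ is strictly increasing on $(0,1)$ (each term $A_nx^{h(n)}$ and $f_{\alpha}(x^{m_0})$ is increasing, with $A_n\ge 0$), and that $f_{\alpha}(x^{m_0})\to\infty$ as $x\to 1^-$; hence there is a unique root $r_{\alpha,m_0,m_1,\infty}^{N}\in(0,1)$ and the inequality holds for all $r\le r_{\alpha,m_0,m_1,\infty}^{N}$. Equivalently, this corollary also drops out of Theorem \ref{Thm2} by discarding the nonnegative term $|f'(w_1(z))||w_2(z)|$ and using that the roots $r_{\alpha,m_0,m_1,m_2}^{N}$ increase to $r_{\alpha,m_0,m_1,\infty}^{N}$ as $m_2\to\infty$.

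Finally, for sharpness I would test the extremal function $f=f_{\alpha}$ from \eqref{Extremal}, for which $f(0)=0$, $a_n=A_n$, $|f'(0)|=1$, and $d(f(0),\partial f(\mathbb{D}))=1/(2\alpha)$ by equality in Lemma \ref{lemmac0}(i). Taking the extremal Schwarz functions $w_0(z)=z^{m_0}$ and $w_n^*(z)=z^{h(n)}$ and evaluating at $z=r\in(0,1)$ real, every modulus collapses to the corresponding positive real quantity, so the left-hand side equals exactly $f_{\alpha}(r^{m_0})+\sum_{n=N}^{\infty}A_n r^{h(n)}$ while the right-hand side equals $1/(2\alpha)$; their difference is $K_{\alpha,m_0,m_1,\infty}^{N}(r)$, which vanishes at $r=r_{\alpha,m_0,m_1,\infty}^{N}$ and is positive just beyond it, so the radius cannot be improved. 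I expect the only delicate points to be the positivity/monotonicity bookkeeping guaranteeing the unique root (which reduces to the known nonnegativity of the coefficients $A_n$ of $f_{\alpha}$ for $\alpha\in[1,2]$) and, in the sharpness step, verifying that no slack enters through the triangle inequality $|f(w_0(z))|\le|f(0)|+|f(w_0(z))-f(0)|$; this is automatic here because $f_{\alpha}(0)=0$.
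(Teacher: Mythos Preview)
Your proof is correct and follows exactly the paper's approach: the paper simply notes that the corollary follows from Theorem~\ref{Thm2} by letting $m_2\to\infty$, and you spell out precisely this (even remarking on the equivalent derivation via Theorem~\ref{Thm2}), together with the same sharpness argument using $f_\alpha$, $w_0(z)=z^{m_0}$, and $w_n^*(z)=z^{h(n)}$. Your observation that the surviving root no longer depends on $m_1$, despite the retained notation, is also a fair point.
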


\medskip

Furthermore, the following result can be derived from Corollary \ref{Coro1} by letting $m_0,m_2\to \infty$.

\medskip

\begin{corollary}\label{coro2}
	Let $f\in \widehat{C_0}(\alpha),\,\alpha\in [1,2]$, with the Taylor expansion \eqref{f}. Then for each $N\in \mathbb{D}$, we have 
	\begin{equation*}
		\sum_{n=N}^{\infty}|a_n|| w^*_n(z)|\le d(f(0),\partial f(\mathbb{D})),\quad |z|=r\le r_{\alpha,\infty,m_1,\infty}^{ N},
	\end{equation*}
	where $r_{\alpha,\infty,m_1,\infty}^{ N}$ is the unique positive root of the equation $K_{\alpha,\infty,m_1,\infty}^{{N}}(x)=0$ defined in \eqref{eqC2}.
	The radius $r_{\alpha,\infty,m_1,\infty}^{ N}$ cannot be improved.
\end{corollary}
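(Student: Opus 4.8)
The plan is to prove Corollary~\ref{coro2} directly from the two estimates in Lemma~\ref{lemmac0}, rather than tracing through the limiting procedure $m_0,m_2\to\infty$ applied to Corollary~\ref{Coro1}; the direct argument is self-contained and makes the sharpness transparent, while the limiting viewpoint serves only to identify the governing equation. First I would record that limiting form: since $x^{m_0}\to 0$ and $x^{m_2}\to 0$ for each fixed $x\in(0,1)$, the term $f_\alpha(x^{m_0})\to f_\alpha(0)=0$ and the term $x^{m_2}(1+x^{m_1})^{\alpha-1}(1-x^{m_1})^{-(\alpha+1)}\to 0$ in \eqref{eqC2}, so that
$$
K_{\alpha,\infty,m_1,\infty}^{N}(x)=\sum_{n=N}^{\infty}A_nx^{h(n)}-\frac{1}{2\alpha},
$$
which in fact no longer depends on $m_1$.

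Next I would set up the chain of estimates. For $w_n^*\in\mathcal{B}_{h(n)}$ the Schwarz lemma applied to $w_n^*(z)/z^{h(n)}$ gives $|w_n^*(z)|\le r^{h(n)}$ for $|z|=r$. Combining this with the coefficient bound $|a_n|\le A_n|f'(0)|$ of Lemma~\ref{lemmac0}(ii) and then the distance bound $|f'(0)|\le 2\alpha\,d(f(0),\partial f(\mathbb{D}))$ of Lemma~\ref{lemmac0}(i) yields
$$
\sum_{n=N}^{\infty}|a_n|\,|w_n^*(z)|\le |f'(0)|\sum_{n=N}^{\infty}A_nr^{h(n)}\le 2\alpha\,d(f(0),\partial f(\mathbb{D}))\sum_{n=N}^{\infty}A_nr^{h(n)}.
$$
Hence the desired inequality $\sum_{n=N}^{\infty}|a_n|\,|w_n^*(z)|\le d(f(0),\partial f(\mathbb{D}))$ holds whenever $2\alpha\sum_{n=N}^{\infty}A_nr^{h(n)}\le 1$, that is, precisely when $K_{\alpha,\infty,m_1,\infty}^{N}(r)\le 0$.

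It then remains to analyse the root structure of $K_{\alpha,\infty,m_1,\infty}^{N}$. Every $A_n>0$ (the coefficients of $f_\alpha$ are positive, since $((1+z)/(1-z))^{\alpha}=\exp(2\alpha(z+z^3/3+\cdots))$ is the exponential of a power series with nonnegative coefficients) and each $h(n)\ge 1$, so under the standing growth assumption on $h$ that makes the series converge on $(0,1)$ the map $x\mapsto K_{\alpha,\infty,m_1,\infty}^{N}(x)$ is strictly increasing there, with $K_{\alpha,\infty,m_1,\infty}^{N}(0^+)=-1/(2\alpha)<0$ and, by monotone convergence, $\lim_{x\to1^-}\sum_{n=N}^{\infty}A_nx^{h(n)}=\sum_{n=N}^{\infty}A_n=+\infty$. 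Thus $K_{\alpha,\infty,m_1,\infty}^{N}$ has a unique positive root $\rho:=r_{\alpha,\infty,m_1,\infty}^{N}$, and $K_{\alpha,\infty,m_1,\infty}^{N}(r)\le 0$ is equivalent to $r\le\rho$, which delivers the inequality on $[0,\rho]$. For sharpness I would test $f=f_\alpha$, for which $|f_\alpha'(0)|=1$, $d(f_\alpha(0),\partial f_\alpha(\mathbb{D}))=1/(2\alpha)$ and $a_n=A_n$, together with the Schwarz functions $w_n^*(z)=z^{h(n)}$; evaluating at real $z=r>0$ gives $\sum_{n=N}^{\infty}|a_n|\,|w_n^*(z)|=\sum_{n=N}^{\infty}A_nr^{h(n)}$, which equals $1/(2\alpha)=d$ exactly at $r=\rho$ and exceeds it for $r>\rho$, so the radius cannot be improved.

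The main obstacle is the root analysis of the third paragraph: the first two steps are a mechanical application of the Schwarz lemma and the two parts of Lemma~\ref{lemmac0}, but establishing existence and uniqueness of $\rho$ requires the positivity of the $A_n$ together with enough growth of $h$ to guarantee both convergence of $\sum A_nx^{h(n)}$ on $(0,1)$ and its passage beyond $1/(2\alpha)$ as $x\to1^-$. The sharpness claim is the other delicate point, and it hinges on the choice $w_n^*(z)=z^{h(n)}$, which forces simultaneous equality in both parts of Lemma~\ref{lemmac0} for the extremal map $f_\alpha$.
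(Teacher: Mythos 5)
Your proposal is correct, and it does differ in route from the paper: the paper offers no separate proof of this corollary at all, but obtains it from Corollary \ref{Coro1} (itself a consequence of Theorem \ref{Thm2}) by ``letting $m_0,m_2\to\infty$,'' i.e.\ by observing that the terms $f_\alpha(x^{m_0})$ and $x^{m_2}(1+x^{m_1})^{\alpha-1}(1-x^{m_1})^{-(\alpha+1)}$ drop out of $K^N_{\alpha,m_0,m_1,m_2}$ in that limit, together with the disappearance of the terms $|f(w_0(z))|-|f(0)|$ and $|f'(w_1(z))||w_2(z)|$ from the left-hand side. You instead rerun the relevant fragment of the paper's proof of Theorem \ref{Thm2} directly: the Schwarz-lemma bound $|w_n^*(z)|\le r^{h(n)}$, Lemma \ref{lemmac0}(ii) and then (i), giving exactly the paper's inequality \eqref{eqc5}, followed by the root analysis of $\sum_{n=N}^\infty A_nx^{h(n)}-1/(2\alpha)$ and the sharpness test with $f=f_\alpha$, $w_n^*(z)=z^{h(n)}$. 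What your route buys is rigor and self-containedness: the paper's limiting derivation is informal (passing from a family of inequalities valid up to $m_0,m_2$-dependent radii to the limiting radius requires an argument, e.g.\ fixing $r$ below the limiting root, choosing $m_0$ large, and taking $w_0=\epsilon z^{m_0}$ with $\epsilon\to 0$), whereas your chain of estimates needs no such justification; you also correctly note, which the paper's notation obscures, that the limiting equation is independent of $m_1$, and you make explicit the growth assumption on $h(n)$ needed for $\sum A_n x^{h(n)}$ to converge on $(0,1)$ — an assumption the paper uses silently in Theorem \ref{Thm2} as well. What the paper's route buys is economy: the corollary is exhibited as a degenerate instance of the general theorem with no additional work. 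One small point to keep in mind: your step ``$2\alpha\,d\sum A_nr^{h(n)}\le d$ iff $2\alpha\sum A_nr^{h(n)}\le 1$'' uses $d(f(0),\partial f(\mathbb{D}))>0$, which holds since $f(\mathbb{D})$ is open and nonempty; worth a half-sentence but harmless.
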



\section{\bf The family of concave univalent functions with pole $p$}\label{Concave pole}

Motivated by the work of Bhowmik and Das \cite{BD18}, we generalize Bohr-Rogosinski's phenomenon for functions that are subordinate to a meromorphic function defined in the unit disk $\mathbb{D}$. Recall from \cite{AW09}, the definition of subordination for meromorphic functions is same as for analytic functions. 

\medskip

Analogous to the family of convex analytic functions, it is interesting to consider the family
of meromorphic concave functions. Let $\widehat{\mathbb{C}}:=\mathbb{C}\cup \{\infty\}$ be the extended complex plane.
We denote by $\widehat{C_p}$ the class of meromorphic univalent functions $f:\mathbb{D}\to \widehat{\mathbb{C}}$ which satisfies the following conditions: 
\begin{enumerate}[label=(\roman*)]
	\item $f$ is analytic in $\mathbb{D}\setminus\{p\}$ and $\widehat{\mathbb{C}}\setminus f(\mathbb{D})$ is convex domain.
	\item $f$ has a simple pole at the point $p$.
\end{enumerate}  
By a suitable rotation, without loss of generality we can assume that
$0 < p < 1$. Each function in $\widehat{C_p}$ is called a {\em concave univalent function with a pole $p\in(0,1)$} and has a Taylor series expansion \eqref{f} in the disk $\mathbb{D}_p:=\{z\in \mathbb{D}:|z|<p\}$.  Let ${C_p}:=\{f\in \widehat{C_p}:f(0)=f'(0)-1=0 \}$. In 2006, Wirths \cite{KJW06} established the representation formula for functions in ${C_p}$. Using this representation, the discussion
based on the Laurent series expansion about the pole $p$ was
started by Bhowmik et al. \cite{BPW07} in 2007, where the authors obtained some coefficient estimates for the family $\widehat{C_p}$. Such
functions are intensively studied by many authors, we refer to the papers \cite{AW07conj, APW04,AW07,KJW03, BD18} for a detailed discussion about this class.

 \medskip

Using the definition of Bohr-Rogosinski's phenomenon for analytic functions, we introduce the notion of Bohr-Rogosinski's phenomenon for the family $S(f)$, where $f$ is meromorphic with pole $p$. We say that $S(f)$ has the Bohr-Rogosinski phenomenon if for any $g\in S(f)$ where $f$ and $g$ have Taylor expansion \eqref{f} and \eqref{g} in $\mathbb{D}_p$ respectively, if there exists $r_f^N,\,0<r_f^N\le p$ such that the inequality \eqref{eqBR} holds. Similar to the analytic case, this can be generalized to \eqref{eqGBR} in terms of the Schwarz function for further investigation. 
 
  \medskip
  
In view of \cite[Theorem 1]{AW07} (see also \cite[Theorem 8.4]{AW09}) for normalized functions $f\in {C_p}$, we can easily obtain the following result.

\medskip

\begin{lemma}\label{lemmacp}
Let $p\in(0,1)$. If $f\in \widehat{C_p}$ and $g=\sum_{n=0}^{\infty}b_nz^n\prec f(z)$, then 
$$
|b_n|\le |f'(0)| \cfrac{1}{p^{n-1}}\sum_{k=0}^{n-1}p^{2k}, \quad n\in \mathbb{N}.
$$
The inequality is sharp for 
\begin{equation}\label{Extremalp}
k_p(z)=\cfrac{pz}{(p-z)(1-pz)}=\sum_{n=1}^{\infty}\cfrac{1-p^{2n}}{(1-p^2)p^{n-1}}z^n=\sum_{n=1}^{\infty}c_n(p)z^n.
\end{equation}
\end{lemma}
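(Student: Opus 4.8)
The plan is to reduce everything to the sharp coefficient estimate for the \emph{normalized} subclass $C_p$ supplied by \cite[Theorem 1]{AW07}, and then to recover the stated inequality by an affine normalization together with the standard scaling of Taylor coefficients. First I would normalize the function. Since $f\in\widehat{C_p}$ is univalent we have $f'(0)\neq 0$, so I may set $\tilde f(z):=(f(z)-f(0))/f'(0)$. The map $w\mapsto (w-f(0))/f'(0)$ is affine, hence it preserves univalence, the position of the simple pole at $p$, and the convexity of the complementary set; consequently $\widehat{\mathbb{C}}\setminus\tilde f(\mathbb{D})$ is again convex and $\tilde f$ still has a simple pole at $p$. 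As $\tilde f(0)=0$ and $\tilde f'(0)=1$, this shows $\tilde f\in C_p$.

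Next I would transport the subordination. Writing $g=f\circ w$ with $w$ a Schwarz function and putting $\tilde g(z):=(g(z)-g(0))/f'(0)$, I get $\tilde g=\tilde f\circ w\prec\tilde f$ with $\tilde g(0)=0$ (using $g(0)=f(w(0))=f(0)$). Comparing Taylor expansions in $\mathbb{D}_p$ gives $b_n=f'(0)\,\tilde b_n$ for every $n\ge 1$, where $\tilde b_n$ denote the Taylor coefficients of $\tilde g$. Invoking the sharp coefficient estimate for $C_p$ from \cite[Theorem 1]{AW07} (equivalently \cite[Theorem 8.4]{AW09}), applied to the normalized function $\tilde f$ and its subordinate $\tilde g\prec\tilde f$, yields $|\tilde b_n|\le c_n(p)$, with $c_n(p)$ the coefficients displayed in \eqref{Extremalp}. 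Multiplying by $|f'(0)|$ then produces $|b_n|=|f'(0)|\,|\tilde b_n|\le|f'(0)|\,c_n(p)$, which is the assertion, once one records the elementary identity $c_n(p)=\frac{1-p^{2n}}{(1-p^2)p^{n-1}}=\frac{1}{p^{n-1}}\sum_{k=0}^{n-1}p^{2k}$.

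I expect the genuine content, and the main obstacle, to lie entirely inside that coefficient estimate: namely, that subordination to a function in $C_p$ does not enlarge the Taylor coefficients beyond the extremal values $c_n(p)$. This is a bound by a \emph{maximal} coefficient functional rather than a termwise domination of $\tilde b_n$ by the coefficients of $\tilde f$, so it cannot be deduced from soft subordination principles such as Littlewood's $L^2$ inequality; it rests on the Avkhadiev--Wirths representation of functions in $C_p$, and is precisely the input I am importing from \cite{AW07,AW09}. Everything else—the affine normalization, the scaling $b_n=f'(0)\tilde b_n$, and the sharpness—is routine.

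For sharpness I would simply take $f=g=k_p$. A partial-fraction decomposition $k_p(z)=\frac{p^2}{1-p^2}\cdot\frac{1}{p-z}-\frac{p}{1-p^2}\cdot\frac{1}{1-pz}$, expanded as geometric series, shows that the $n$-th Taylor coefficient of $k_p$ equals $c_n(p)$; moreover $k_p\in C_p$ with $|k_p'(0)|=c_1(p)=1$, and $k_p\prec k_p$ trivially. Hence every instance of the inequality becomes an equality for this choice, so the bound cannot be improved.
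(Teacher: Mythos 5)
Your proposal is correct and follows essentially the same route as the paper: normalize via $\tilde f(z)=(f(z)-f(0))/f'(0)\in C_p$, transport the subordination to $\tilde g\prec\tilde f$, invoke the Avkhadiev--Wirths coefficient bound for subordinates of functions in $C_p$ from \cite[Theorem 1]{AW07}, and rescale by $|f'(0)|$. Your explicit verification of sharpness via the partial-fraction expansion of $k_p$ is a welcome addition that the paper leaves implicit.
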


\begin{proof}
For $f\in \widehat{C_p}$, the function defined by $F(z)=(f(z)-f(0))/f'(0)$ belongs to the class ${C_p}$. Note that $f(0)=g(0)$ and $f\prec g$ if, and only if,
$$
\cfrac{1}{f'(0)}\sum_{n=1}^{\infty}b_n z^n=\cfrac{g(z)-g(0)}{f'(0)}\prec \cfrac{f(z)-f(0)}{f'(0)}=F(z), \quad z\in \mathbb{D}.
$$
Therefore, by \cite[Theorem 1]{AW07}, we easily get
$$
\cfrac{b_n}{f'(0)}\le \cfrac{1}{p^{n-1}}\sum_{k=0}^{n-1}p^{2k}.
$$ 
\end{proof}
\noindent

We now state our next result, which deals with the Bohr-Rogosinki phenomenon for the family $\widehat{C_p}$.

\medskip
\begin{theorem}\label{Thm4}
	If $f\in \widehat{C_p},\,p\in(0,1),$ and $g(z)=\sum_{n=0}^{\infty}b_nz^n\in S(f)$. Then for each $N\in \mathbb{N}$ the inequality
\begin{equation}\label{eqcpbohr}
|g(w_0(z))|+\sum_{n=N}^{\infty}|b_n|z^n\le |f(0)|+d(f(0),\partial f(\mathbb{D}))
\end{equation}
holds for $|z|=r\le r_{p,m_0}^{ N}<p$, where $r_{p,m_0}^{ N}$ is the positive root of the equation $G_{p,m_0}^{ N}(x)=0$,
\begin{equation}\label{eqcp}
G_{p,m_0}^{ N}(x):=\sum_{n=  N}^{\infty}\cfrac{1-p^{2n}}{(1-p^2)p^{n-1}}\,x^n+k_p(x^{m_0})-\cfrac{p}{(1+p)^2}.
\end{equation}
The radius is sharp for the function $f=k_p$ given by \eqref{Extremalp}.
\end{theorem}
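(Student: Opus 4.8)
The plan is to follow the scheme of Theorem \ref{Thm1}, but with the analytic growth and coefficient estimates of Lemma \ref{lemmac0} replaced by their meromorphic counterparts for $\widehat{C_p}$. Write $c_n(p)=\frac{1-p^{2n}}{(1-p^2)p^{n-1}}$ for the Taylor coefficients of the extremal function $k_p$ in \eqref{Extremalp}, so that $\sum_{n=1}^{\infty}c_n(p)x^n=k_p(x)$ for $0\le x<p$. I will split the left-hand side of \eqref{eqcpbohr} into $|g(w_0(z))|$ and the tail $\sum_{n=N}^{\infty}|b_n|r^n$, bound each piece by $|f'(0)|$ times an explicit power series in $r$, and finally convert the factor $|f'(0)|$ into the boundary distance. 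Note that no analogue of Lemma \ref{lemma1} (and hence no $1/3$) enters here, since Lemma \ref{lemmacp} bounds the coefficients $b_n$ of a subordinate function \emph{directly}; the constraint becomes $r<p$, dictated by the pole of $k_p$.

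First I estimate $|g(w_0(z))|$. As $g\prec f$ we have $b_0=g(0)=f(0)$, and since $w_0\in\mathcal{B}_{m_0}$ the Schwarz lemma gives $|w_0(z)|\le r^{m_0}$ for $|z|=r$. Writing $g(w_0(z))=b_0+\sum_{n=1}^{\infty}b_n w_0(z)^n$ and invoking Lemma \ref{lemmacp} ($|b_n|\le|f'(0)|\,c_n(p)$ for every $g\prec f$ with $f\in\widehat{C_p}$), I obtain
$$
|g(w_0(z))|\le|f(0)|+\sum_{n=1}^{\infty}|b_n|\,r^{m_0 n}\le|f(0)|+|f'(0)|\sum_{n=1}^{\infty}c_n(p)\,r^{m_0 n}=|f(0)|+|f'(0)|\,k_p(r^{m_0}),
$$
which is legitimate because $r^{m_0}\le r<p$ ensures convergence. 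Applying Lemma \ref{lemmacp} again to the tail gives $\sum_{n=N}^{\infty}|b_n|r^n\le|f'(0)|\sum_{n=N}^{\infty}c_n(p)r^n$, convergent for $r<p$. Adding the two estimates yields
$$
|g(w_0(z))|+\sum_{n=N}^{\infty}|b_n|r^n\le|f(0)|+|f'(0)|\Big(k_p(r^{m_0})+\sum_{n=N}^{\infty}c_n(p)r^n\Big).
$$

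The decisive step is to replace $|f'(0)|$ by the distance. I will use the sharp covering estimate for $\widehat{C_p}$, namely $|f'(0)|\le\frac{(1+p)^2}{p}\,d(f(0),\partial f(\mathbb{D}))$, the pole analogue of Lemma \ref{lemmac0}(i); after the normalization $F=(f-f(0))/f'(0)\in C_p$ this amounts to $F(\mathbb{D})\supseteq D(0,\tfrac{p}{(1+p)^2})$, which is extremal for $k_p$ since $k_p(-1)=-\frac{p}{(1+p)^2}$ is its nearest boundary point. This is the part I expect to be the \emph{main obstacle}: it is a genuine covering theorem that must be extracted from Wirths's representation formula \cite{KJW06} (or from \cite{AW07,AW05}), not from the coefficient bound alone — indeed the naive supporting-line/subordination argument only produces the reverse inequality $d\le\tfrac12|f'(0)|$. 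Granting it, the displayed bound is $\le|f(0)|+d(f(0),\partial f(\mathbb{D}))$ precisely when $k_p(r^{m_0})+\sum_{n=N}^{\infty}c_n(p)r^n\le\frac{p}{(1+p)^2}$, that is, $G_{p,m_0}^{N}(r)\le0$ with $G_{p,m_0}^{N}$ as in \eqref{eqcp}.

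It remains to treat the radius and sharpness. The map $x\mapsto G_{p,m_0}^{N}(x)$ is continuous and strictly increasing on $(0,p)$ with $G_{p,m_0}^{N}(0)=-\frac{p}{(1+p)^2}<0$ and $G_{p,m_0}^{N}(x)\to+\infty$ as $x\to p^-$ (because $c_n(p)p^n\to\frac{p}{1-p^2}\neq0$ forces $\sum c_n(p)x^n$ to blow up), so it has a unique root $r_{p,m_0}^{N}\in(0,p)$, and $G_{p,m_0}^{N}(r)\le0$ for all $r\le r_{p,m_0}^{N}$; this proves \eqref{eqcpbohr}. For sharpness I take $f=g=k_p$ and $w_0(z)=z^{m_0}$: then $b_n=c_n(p)>0$, $f(0)=0$, $d(f(0),\partial f(\mathbb{D}))=\frac{p}{(1+p)^2}$, and at $z=r$ real both $|k_p(r^{m_0})|$ and $|b_n|r^n=c_n(p)r^n$ attain their bounds simultaneously, so the left-hand side equals $k_p(r^{m_0})+\sum_{n=N}^{\infty}c_n(p)r^n$, which coincides with $\frac{p}{(1+p)^2}$ at $r=r_{p,m_0}^{N}$ and exceeds it for $r>r_{p,m_0}^{N}$. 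Hence the radius cannot be improved.
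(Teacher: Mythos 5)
Your proposal is correct and follows essentially the same route as the paper: bound $|g(w_0(z))|$ and the tail $\sum_{n\ge N}|b_n|r^n$ via the coefficient estimates of Lemma \ref{lemmacp}, convert $|f'(0)|$ into the distance through the covering inequality $d(f(0),\partial f(\mathbb{D}))\ge \frac{p}{(1+p)^2}\,|f'(0)|$, and then run the same monotonicity/root analysis for $G_{p,m_0}^{N}$ on $(0,p)$ and the same sharpness example $f=g=k_p$ with $w_0(z)=z^{m_0}$. The covering estimate you flag as the main obstacle and take for granted is precisely the paper's inequality \eqref{eqcpd}, which it likewise does not prove from scratch but deduces via a Koebe transform from Wirths's Koebe-domain theorem for $C_p$ (the correct citation being \cite{KJW03} rather than \cite{KJW06}), so your treatment of that step matches the paper's in substance.
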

\noindent

\begin{remark}
We first observe that for $ N=1$ and $m_0\to \infty$, we have 
$$
r_{p,m_0}^{ N}=(p+1/p+1)-(\sqrt{p}+1/\sqrt{p})\sqrt{p+1/p}
$$ 
as the root of the equation $pr^2-2(p^2+1+p)r+p=0$ and thus Theorem \ref{Thm4} contains the result of \cite[Corollary 1]{BD18} as a special case.
\end{remark}

\medskip

Furthermore, the substitution $w_0(z)=z$ bring Theorem \ref{Thm4} back into the following form.

\medskip

\begin{corollary}
	If $f\in \widehat{C_p},\,p\in(0,1),$ with the Taylor series expansion \eqref{f} and $g(z)=\sum_{n=0}^{\infty}b_nz^n\in S(f)$. Then for each $N\in \mathbb{N}$, the inequality
	\begin{equation*}
		|g(z)|+\sum_{n=N}^{\infty}|b_n|z^n\le |f(0)|+d(f(0),\partial f(\mathbb{D}))
	\end{equation*}
	holds for $|z|=r\le r_{p,1}^{ N}<p$, where $r_{p,1}^{ N}$  is the positive root of the equation $G_{p,1}^{ N}(r)=0$ given by \eqref{eqcp}. The radius is sharp for the function $f(z)=k_p(z)$ as defined in \eqref{Extremalp}.
\end{corollary}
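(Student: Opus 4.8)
This corollary is the specialization $w_0(z)=z$ of Theorem \ref{Thm4}: the identity map lies in $\mathcal{B}_1$, and putting $m_0=1$ there turns $k_p(x^{m_0})$ into $k_p(x)$, so $G_{p,1}^{N}$ is exactly the function in \eqref{eqcp}. Hence the inequality, the radius $r_{p,1}^{N}$ and its sharpness all follow at once from Theorem \ref{Thm4}, and formally nothing else is needed. For completeness I record the direct argument, since it isolates the single geometric step on which the whole estimate rests.

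Since $g\prec f$, we have $g=f\circ w$ for a Schwarz function $w$, so $g(0)=f(0)$ and $g$ is analytic on $\mathbb{D}_p$ with the expansion \eqref{g}. For $|z|=r<p$ I would write $|g(z)|\le |f(0)|+\sum_{n=1}^{\infty}|b_n|r^n$ and add the tail, and then invoke Lemma \ref{lemmacp}, which gives $|b_n|\le |f'(0)|\,c_n(p)$ with $c_n(p)$ the Taylor coefficients of $k_p$ in \eqref{Extremalp}. Using $\sum_{n=1}^{\infty}c_n(p)r^n=k_p(r)$ this yields
\[
|g(z)|+\sum_{n=N}^{\infty}|b_n|r^n\le |f(0)|+|f'(0)|\Big(k_p(r)+\sum_{n=N}^{\infty}c_n(p)r^n\Big).
\]

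The second ingredient is a covering estimate: for every $f\in\widehat{C_p}$ one has $d(f(0),\partial f(\mathbb{D}))\ge |f'(0)|\,p/(1+p)^2$, with equality for $k_p$. Granting this, the claim reduces to $k_p(r)+\sum_{n=N}^{\infty}c_n(p)r^n\le p/(1+p)^2$, i.e. to $G_{p,1}^{N}(r)\le 0$. Each term of $G_{p,1}^{N}$ is increasing on $(0,p)$, while $G_{p,1}^{N}(0)=-p/(1+p)^2<0$ and $G_{p,1}^{N}(x)\to+\infty$ as $x\to p^-$ (the series has radius of convergence $p$); hence $G_{p,1}^{N}$ has a unique zero $r_{p,1}^{N}\in(0,p)$, the inequality $G_{p,1}^{N}(r)\le 0$ holds exactly for $r\le r_{p,1}^{N}$, and $r_{p,1}^{N}<p$ is automatic.

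The main, and essentially only non-formal, obstacle is the covering bound $d(f(0),\partial f(\mathbb{D}))\ge |f'(0)|\,p/(1+p)^2$, the pole analogue of Lemma \ref{lemmac0}(i). I would obtain it by normalizing $F=(f-f(0))/f'(0)\in C_p$ and using the extremal role of $k_p$ for the class $C_p$ (equivalently, quoting the distance statement from \cite{AW07,AW09}); for $k_p$ itself one checks directly that $k_p(-1)=-p/(1+p)^2$ is the boundary point nearest the origin, so the bound is sharp. Sharpness of $r_{p,1}^{N}$ then follows by taking $f=g=k_p$ and $z=r\in(0,p)$: all coefficients equal $c_n(p)>0$, every series is positive, $|g(r)|=k_p(r)$, and the covering estimate is an equality, so the left-hand side equals $|f(0)|+d(f(0),\partial f(\mathbb{D}))+G_{p,1}^{N}(r)$, which meets the bound precisely at $r=r_{p,1}^{N}$ and exceeds it beyond.
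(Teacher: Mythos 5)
Your proposal is correct and takes essentially the same approach as the paper: the paper obtains this corollary precisely by substituting $w_0(z)=z$ (i.e.\ $m_0=1$) into Theorem \ref{Thm4}, which is your primary argument. Your supplementary direct proof merely retraces the paper's proof of Theorem \ref{Thm4} in this special case---coefficient bounds from Lemma \ref{lemmacp}, the covering estimate $d(f(0),\partial f(\mathbb{D}))\ge |f'(0)|\,p/(1+p)^2$, monotonicity and the unique root of $G_{p,1}^{N}$ in $(0,p)$, and sharpness via $f=g=k_p$---so nothing further is needed.
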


\section{\bf Proof of the main results}\label{proofs}
In order to prove our main results, we frequently use a consequence of the Schwarz lemma which says that ``{\em if $w\in \mathcal{B}_m$ for some $m\in \mathbb{N}$, then $|w(z)|\le |z|^m$ for all $z\in \mathbb{D}$.}"
 
\subsection{Proof of Theorem \ref{Thm1}} Let $g(z)=\sum_{n=0}^{\infty}b_n z^n\in S(f)$, where $f\in \widehat{C_0}(\alpha)$, $\alpha\in[1,2]$. Then the condition $g\prec f$ and the growth theorem \cite[Corollary 2.4]{B12} lead to the fact that
\begin{equation*}
|g(z)-g(0)|\le |f'(0)|f_{\alpha}(r)\le 2\alpha\, d(f(0),\partial f(\mathbb{D}))f_{\alpha}(r)
\end{equation*}
so that (because $f(0)=g(0)$) for $m_0\in \mathbb{N}$, we have
\begin{equation}\label{eqcsub1}
|g(w_0(z))|\le |f(0)|+ 2\alpha\, d(f(0),\partial f(\mathbb{D}))f_{\alpha}(r^{m_0}).
\end{equation}
By using Lemma \ref{lemma1}, we obtain the following inequality
\begin{equation}\label{eqcsub2}
	\sum_{n=  N}^{\infty} |b_n|r^n \le \sum_{n=  N}^{\infty} |a_n|r^n= |f'(0)|\sum_{n=  N}^{\infty} A_nr^n\le 2\alpha\, d(f(0),\partial f(\mathbb{D}))\sum_{n=  N}^{\infty} A_nr^n
\end{equation}
for $|z|=r\le 1/3$. Clearly from \eqref{eqcsub1} and \eqref{eqcsub2}, we deduce that
\begin{align*}
|g(w_0(z))|+\sum_{n=N}^{\infty} |b_n|r^n&\le |f(0)|+ 2\alpha\, d(f(0),\partial f(\mathbb{D}))\bigg(\sum_{n=  N}^{\infty} A_nr^n+f_{\alpha}(r^{m_0})\bigg)\\
&= |f(0)|+ 2\alpha \,d(f(0),\partial f(\mathbb{D}))\bigg(F_{\alpha,m_0}^{ N}(r)+\cfrac{1}{2\alpha}\bigg)\\&:=\Phi_{f,\alpha,m_0}^{ N}(r),
\end{align*}
where $F_{\alpha,m_0}^{ N}(r)$ is given by \eqref{eqcsub3}.
 Obviously, $F_{\alpha,m_0}^{ N}(r)$ is an increasing function of $r$ on $[0,1)$. Moreover, $F_{\alpha,m_0}^{ N}(0)<0$ and $\lim_{r\to 1}F_{\alpha,m_0}^{ N}(r)=+\infty$ and hence the equation $F_{\alpha,m_0}^{ N}(r)=0$ has the unique positive root $r_{\alpha,m_0}^{ N}$ in $(0,1)$. In order to complete the proof, it suffices to show that $\Phi_{f,\alpha,m_0}^{ N}(r)\le |f(0)|+  d(f(0),\partial f(\mathbb{D}))$ holds for $|z|=r\le \{r_{\alpha,m_0}^{ N},1/3\}$. 
 
 \medskip
If $r_{\alpha,m_0}^{ N}\le 1/3$, then we show that $r_{\alpha,m_0}^{ N}$ cannot be improved. For $\alpha
\in [1,2]$, consider $g=f=f_{\alpha}$. Then, for $z=r$, 
\begin{align*}
|f(w_0(r))|+	\sum_{n=  N}^{\infty} |a_n|r^n&=|f_{\alpha}(0)|+2\alpha \, d(f_{\alpha}(0),\partial f_{\alpha}(\mathbb{D}))\bigg(f_{\alpha}(r^{m_0})+\sum_{n=  N}^{\infty} A_nr^n\bigg)\\
&> d(f_{\alpha}(0),\partial f_{\alpha}(\mathbb{D}))=\cfrac{1}{2\alpha}
\end{align*}
holds for $r>r_{\alpha,m_0}^{ N}$ and hence the Bohr radius $r_{\alpha,m_0}^{ N}$ is sharp.
\hfill{$\Box$}
 
\subsection{Proof of Theorem \ref{Thm2}} Let $f\in C_0(\alpha)$ and $w_0\in \mathcal{B}_0$. Then it follows,  for $|z|=r<1$, from Lemma \ref{lemmac0} and the classical the Schwarz lemma that
\begin{align}
|f(w_0(z))|&\le |f(0)|+|f'(0)|\sum_{n=1}^{\infty}A_n|w_0(z)|^n\nonumber\\
&=|f(0)|+2\alpha\,  d(f(0),\partial f(\mathbb{D})) f_{\alpha}(r^{m_0}).\label{eqc0}
\end{align}
On the other hand, the Distortion theorem \cite[Corollary 2.3]{B12} leads to
\begin{align}
	|f'(w_1(z))||w_2(z)|&\le |f'(0)|\cfrac{(1+|w_1(z)|)^{\alpha-1}}{(1-|w_1(z)|)^{\alpha+1}}\,r^{m_2}\nonumber\\
	&\le 2\alpha  \, d(f(0),\partial f(\mathbb{D}))\cfrac{(1+r^{m_1})^{\alpha-1}}{(1-r^{m_1})^{\alpha+1}}r^{m_2}\label{eqc4}.
\end{align}
The last inequality follows because $u(x)=(1+x)^{\alpha-1}/(1-x)^{\alpha+1}$ is an increasing function on $[0,1]$. Also, from Lemma \ref{lemmac0}(ii) we can write
\begin{align}
	\sum_{n=  N}^{\infty}|a_n|| w^*_n(z)|&\le |f'(0)|\sum_{n=  N}^{\infty}A_n| w^*_n(z)|\nonumber\\
	&\le 2\alpha \,  d(f(0),\partial f(\mathbb{D})) \sum_{n=  N}^{\infty}A_n r^{b_n}.\label{eqc5}
\end{align}
Thus combining the equations \eqref{eqc0}, \eqref{eqc4}, and \eqref{eqc5} we obtain 
\begin{align}
|f(w_0(z))|+&|f'(w_1(z))||w_2(z)|+\sum_{n=  N}^{\infty}|a_n|| w^*_n(z)|\nonumber\\&\le |f(0)|+2\alpha \, d(f(0),\partial f(\mathbb{D})) \bigg(f_{\alpha}(r^{m_0})+ \sum_{n=  N}^{\infty}A_n r^{b_n}+\cfrac{(1+r^{m_1})^{\alpha-1}}{(1-r^{m_1})^{\alpha+1}}\,r^{m_2}\bigg)\nonumber\\
&= |f(0)|+2\alpha\, d(f(0),\partial f(\mathbb{D}))\bigg(K_{\alpha,m_0,m_1,m_2}^{ N}(r)+\cfrac{1}{2\alpha}\bigg)\label{eqc3},
\end{align}
where 
$$
K_{\alpha,m_0,m_1,m_2}^{ N}(r)=f_{\alpha}(r^{m_0})+ \sum_{n=  N}^{\infty}A_n r^{b_n}+\cfrac{(1+r^{m_1})^{\alpha-1}}{(1-r^{m_1})^{\alpha+1}}\,r^{m_2}-\cfrac{1}{2\alpha}.
$$
Note that the function $K_{\alpha,m_0,m_1,m_2}^{ N}(r)$ is a strictly increasing function of $r$ in $(0,1)$, $K_{\alpha,m_0,m_1,m_2}^{ N}(0)$ is negative, and $\lim_{r\to 1}K_{\alpha,m_0,m_1,m_2}^{ N}(r)=\infty$ and hence there exists the unique positive root say $r_{\alpha,m_0,m_1,m_2}^{ N}$ of the equation \eqref{eqC2} in $(0,1)$. Thus the last quantity of the inequality \eqref{eqc3} is less than or equal to $|f(0)|+ d(f(0),\partial f(\mathbb{D}))$ for $r\le r_{\alpha,m_0,m_1,m_2}^{ N}$ and this completes the first part of the proof.

\medskip
Next we show that the radius $r_{\alpha,m_0,m_1,m_2}^{ N}$ is sharp. Let $w_i(z)=z^{m_i}\,(i=0,1,2)$, $ w^*_n(z)=z^{h(n)}$, and $f=f_{\alpha}$. Simple computation shows that
$$
|f_{\alpha}(w_0(z))|+|f_{\alpha}'(w_1(z))||w_2(z)|+\sum_{n=  N}^{\infty}A_n| w^*_n(z)|=|f_{\alpha}(z^{m_0})|+|f_{\alpha}'(z^{m_1})|r^{m_2}+\sum_{n=  N}^{\infty}A_nr^{h(n)}.
$$
After substituting $z=r$ in the above equation, we obtain
\begin{align*}
|f_{\alpha}(w_0(r))|+|f_{\alpha}'(w_1(r))||w_2(r)|
+&\sum_{n=  N}^{\infty}A_n| w^*_n(r)|\\&=|f_{\alpha}(0)|+2\alpha \, d(f_{\alpha}(0),\partial f_{\alpha}(\mathbb{D}))\bigg(K_{\alpha,m_0,m_1,m_2}^{ N}(r)+\cfrac{1}{2\alpha}\bigg)\\
  &> d(f_{\alpha}(0),\partial f_{\alpha}(\mathbb{D}))=\cfrac{1}{2\alpha}
 \end{align*}
which holds if, and only if, $r> r_{\alpha,m_0,m_1,m_2}^{ N}$.
This completes the proof.\hfill{$\Box$}


\subsection{Proof of Theorem \ref{Thm4}} 
For a given $f\in\widehat{C_p}$, let $g(z)=\sum_{n=0}^{\infty}b_n z^n \in S(f)$. Let the Koebe transform of $f$,
$$
F(z)=\cfrac{f\bigg(\cfrac{z+a}{1+\overline{a}z}\bigg)-f(a)}{(1-|a|^2)f'(a)}
$$
for any $a \in \mathbb{D}\setminus\{p\}$. Since for some $t\in \mathbb{R}$, $e^{-it}F(ze^{it})\in C_{|\frac{p-a}{1-\overline{a}p}|}$. Therefore, from \cite{KJW03}, we have
\begin{equation}\label{eqcpd}
d(f(0),\partial f(\mathbb{D}))\ge (p/(1+p)^2)|f'(0)|.
\end{equation}
As $F(z)=(f(z)-f(0))/f'(0)\in {C_p}$, it follows that (see \cite{Jen62})          
\begin{equation}\label{eqan}
|a_n|\le |f'(0)|\cfrac{1-p^{2n}}{(1-p^2)p^{n-1}}, \mbox{ for all $n\ge 1$}.
\end{equation}
In view of $g\prec f$ and \eqref{eqan} we have the following inequality
\begin{equation*}
|g(z)-g(0)|\le |f(z)-f(0)|\le \sum_{n=1}^{\infty} |a_n|r^n\le k_p(r)=|f'(0)|\sum_{n=1}^{\infty}\cfrac{1-p^{2n}}{(1-p^2)p^{n-1}}\,r^n.
\end{equation*}
Equivalently, for $w_0\in \mathcal{B}_{m_0}$, we obtain
\begin{equation}\label{eqcp1}
	|g(w_0(z))|\le |f(0)|+|f'(0)|\sum_{n=1}^{\infty}\cfrac{1-p^{2n}}{(1-p^2)p^{n-1}}r^{m_0n}=|f(0)|+|f'(0)|k_p(r^{m_0}).
\end{equation}
Also, Lemma \ref{lemmacp} gives that
\begin{equation}\label{eqp1}
\sum_{n=  N}^{\infty} |b_n|r^n\le |f'(0)|\sum_{n=  N}^{\infty}\cfrac{1-p^{2n}}{(1-p^2)p^{n-1}}\,r^n.
\end{equation}
Thus, using \eqref{eqcp1} and \eqref{eqp1} together with inequality \eqref{eqcpd}, we obtain
\begin{align*}
|g(w_0(z))|+\sum_{n=  N}^{\infty} |b_n|r^n&\le  |f(0)|+\cfrac{(1+p)^2}{p}~d(f(0),\partial f(\mathbb{D})) \bigg(\sum_{n=  N}^{\infty}\cfrac{1-p^{2n}}{(1-p^2)p^{n-1}}r^n+k_p(r^{m_0})\bigg)\\
&=|f(0)|+\cfrac{(1+p)^2}{p}~d(f(0),\partial f(\mathbb{D}))\bigg(G_{p,m_0}^{ N}(r)+\cfrac{p}{(1+p)^2}\bigg)\\
&:=T_{f,p,m_0}^{ N}(r),
\end{align*}
where $G_{p,m_0}^{ N}(r)$ is given by \eqref{eqcp}. Note that $G_{p,m_0}^{ N}(0)<0$ and since
$$
\sum_{n=  N}^{\infty}\cfrac{1-p^{2n}}{(1-p^2)p^{n-1}}p^n=\cfrac{p(1-p^{2N})}{(1-p^2)}+\sum_{n=  N+1}^{\infty}\cfrac{p(1-p^{n})(1+p^n)}{(1-p^2)}\ge \sum_{n=  N+1}^{\infty}p(1+p^n)
$$
diverges to infinity, we find that $G_{p,m_0}^{ N}(p)>0 $ while $G_{p,m_0}^{ N}(r)$ is strictly increasing in $(0,1)$. Therefore we conclude that the equation \eqref{eqcp} has the unique positive root say $r_{p,m_0}^{ N}$, $0<r_{p,m_0}^{ N}<p$ in $(0,1)$. This shows that $T_{f,p,m_0}^{ N}(r)\le |f(0)|+d(f(0),\partial f(\mathbb{D}))$, for $r\le r_{p,m_0}^{ N}$.

\medskip
For the equality, we consider the function $w_0(z)=z^{m_0}$ and $f=g=	k_{p}$ given by \eqref{Extremalp}. For this function it is well known that $\widehat{\mathbb{C}}\setminus k_p(\mathbb{D})=[-p/(1-p)^2,-p/(1+p)^2]$ (see \cite[p.~137]{AW09}) and hence we obtain $d(k_p(0),\partial k_p(\mathbb{D}))=p/(1+p)^2$. Taking $z=r$, then the left side of the inequality \eqref{eqcpbohr} reduces to
\begin{align*}
	|k_p(r^{m_0})|+&\sum_{n=  N}^{\infty} \cfrac{1-p^{2n}}{(1-p^2)p^{n-1}}r^n\\&=|k_p(0)|+\cfrac{(1+p)^2}{p}~d(k_p(0),\partial k_p(\mathbb{D}))\bigg(k_p(r^{m_0})+\sum_{n=  N}^{\infty}\cfrac{1-p^{2n}}{(1-p^2)p^{n-1}}r^n\bigg)\\
	&>d(k_p(0),\partial k_p(\mathbb{D}))=\cfrac{p}{(	1+p)^2},
\end{align*}
which holds if, and only if, $r> r_{p,m_0}^{ N}$, which means that the number $r_{p,m_0}^{ N}$ cannot be improved. This shows the sharpness.
\hfill{$\Box$} 


\bigskip
\noindent
{\bf Acknowledgement.}
The first author thanks SERB-CRG and the second author thanks IIT Bhubaneswar for providing Institute Post Doctoral Fellowship.

\medskip
\noindent
{\bf Conflict of Interests.} The authors declare that there is no conflict of interests 
regarding the publication of this paper.

\end{document}